
\documentclass[12pt]{amsart}
\usepackage{latexsym, amsthm, amscd, euscript, float, subfig}
\setlength{\textwidth}{16cm} \setlength{\oddsidemargin}{0cm}
\setlength{\evensidemargin}{0cm} \setlength{\footskip}{40pt}

\parskip4pt plus2pt minus2pt


\usepackage{color}

\usepackage{amssymb}
\usepackage{amsmath}
\usepackage{graphicx}

{}

\newtheorem{theorem}{Theorem}[section]
\newtheorem{lemma}[theorem]{Lemma}
\newtheorem{corollary}[theorem]{Corollary}
\newtheorem{proposition}[theorem]{Proposition}

\newtheorem{remark}[theorem]{Remark}

\newtheorem*{acknowledgement}{Acknowledgment}

\newtheorem{conjecture}[theorem]{Conjecture}

\numberwithin{equation}{section}

\newcounter{minutes}\setcounter{minutes}{\time}
\divide\time by 60
\newcounter{hours}\setcounter{hours}{\time}
\multiply\time by 60
\addtocounter{minutes}{-\time}


\newcommand{\IR}{\mathbb{R}}
\newcommand{\IB}{\mathbb{B}}
\newcommand{\ds}{\displaystyle}

\newcommand{\Bn}{ {\mathbb{B}^n} }
\newcommand{\Rn}{ {\mathbb{R}^n} }
\newcommand{\Hn}{ {\mathbb{H}^n} }
\newcommand{\Ht}{ {\mathbb{H}^2} }

\newcommand{\beq}{\begin{equation}}
\newcommand{\eeq}{\end{equation}}
\newcommand{\arsh}{\,\textnormal{arsh}}

\renewcommand{\sinh}{\,\textnormal{sinh}}



\begin{document}
\vspace*{-2cm}

\title[Geometric properties of the Cassinian metric]
{Geometric properties of the Cassinian metric}

\def\thefootnote{}
\footnotetext{ \texttt{\tiny File:~\jobname .tex,
          printed: \number\day-\number\month-\number\year,
          \thehours.\ifnum\theminutes<10{0}\fi\theminutes}
} \makeatletter\def\thefootnote{\@arabic\c@footnote}\makeatother

\author[R. Kl\'en]{Riku Kl\'en}
\address{Riku Kl\'en, Department of Mathematics and Statistics,
University of Turku, Turku 20014, Finland 
}
\email{riku.klen@utu.fi}

\author[M. R. Mohapatra]{Manas Ranjan Mohapatra}
\address{Manas Ranjan Mohapatra, Discipline of Mathematics,
Indian Institute of Technology Indore,
Simrol, Khandwa Road, Indore 452 020
}
\email{mrm.iiti@gmail.com}

\author[S. K. Sahoo]{Swadesh Kumar Sahoo}
\address{Swadesh Kumar Sahoo, Discipline of Mathematics,
Indian Institute of Technology Indore,
Simrol, Khandwa Road, Indore 452 020}
\email{swadesh@iiti.ac.in}

\begin{abstract}
In this paper we 
prove a sharp distortion property of the Cassinian metric under M\"obius transformations 
of a punctured ball onto another punctured ball. 
The paper also deals with discussion on local convexity properties of the 
Cassinian metric balls in some specific subdomains of $\mathbb{R}^n$. Inclusion properties of 
the Cassinian metric balls with other hyperbolic-type metric balls are also
investigated. 
In particular, several conjectures are also stated in response to sharpness 
of the inclusion relations.
\\

\smallskip
\noindent
{\bf 2010 Mathematics Subject Classification}. 30C35, 30C20, 30F45, 51M10.

\smallskip
\noindent
{\bf Keywords and phrases.}
M\"obius transformations, Cassinian metric, 
distortion property, inclusion property, metric balls, convexity.
\end{abstract}

\maketitle

\section{Introduction}\label{Intro}
Conformal invariants have important roles in geometric function theory. One of
the basic conformal invariants is the modulus of a curve family, which is
used to define quasiconformal maps \cite{Vai71}. 
In Euclidean spaces of dimension more than two, 
conformal maps are nothing but the restrictions of M\"obius maps; for instance, see \cite{Vai71,Vuo07}. 
Therefore, it is natural to study M\"obius invariants in the higher dimensional setting. 
There are some metrics which are M\"obius invariant 
and some are not. For example, the Apollonian \cite{Bea95,Has03} and the Seittenranta \cite{Sei99} metrics are M\"obius 
invariant whereas the quasihyperbolic \cite{GO79,GP76} and the distance ratio \cite{Vuo88} metrics are not. 
The study of the M\"obius quasi-invariance property is hence natural for these metrics which 
are not M\"obius invariant.
In other words, it would be interesting if we obtain the Lipschitz constants for those metrics which do not belong to
the M\"obius invariant family.
Gehring and Osgood in \cite{GO79} proved that the distance ratio metric and the quasihyperbolic metric 
are not changed by more than a factor $2$ under M\"obius maps. In \cite{SV,SVW}, sharp distortion properties of the distance ratio metric under M\"obius transformations of the unit (punctured) balls are obtained. A sharp distortion property of the Cassinian metric
under M\"obius transformations of the unit ball is also recently established in \cite{IMSZ}.  

Most of the metrics mentioned in this paper belong to the family of relative metrics,
some are M\"obius invariant and some are not. By a {\em relative metric} we mean a 
metric that is evaluated in a proper subdomain of $\IR^n$ relative to its boundary.
Note that the topological operations (boundary, closure, etc.) are taken in the compact space $\overline{\IR^n}$.
One of our main objectives in this paper is to consider a relative metric, a special case of the family of metrics
defined in \cite[Lemma~6.1]{Has02}, the so-called {\em Cassinian metric}, independently 
first studied by Ibragimov \cite{Ibr09} and look at its M\"obius quasi-invariance properties.
Note that the generalized relative metric defined in \cite[Lemma~6.1]{Has02} is named as 
the {\em $M$-relative metric} and defined on a domain $D\subsetneq \IR^n$ by the quantity
\begin{equation}\label{M-relative}
\rho_{M,D}(x,y):=\sup_{a\in\partial D}\frac{|x-y|}{M(|x-a|,|y-a|)}, 
\end{equation}
where $M$ is continuous in $(0,\infty)\times (0,\infty)$ and $\partial D$ denotes the boundary of $D$. 
If $M(\alpha,\beta)=\alpha\beta$,
then the corresponding relative metric $\rho_{M,D}$ defines the Cassinian metric introduced 
in \cite{Ibr09} and subsequently studied in \cite{HKVZ15,IMSZ}. 
The choice $M(\alpha,\beta)=\alpha+\beta$ similarly leads to the triangular ratio metric
recently investigated in \cite{CHKV15,HKLV15}. 
We refer to the next section for the explicit definition of 
the Cassinian metric.

In one hand, we study distortion properties of the Cassinian metric under M\"obius and bi-Lipschitz maps
in Section~\ref{sec3}.
On the other hand, we also focus on a general question suggested by Vuorinen in \cite{Vuo07}
about the convexity of balls of small radii in metric spaces. This problem has been investigated
by Kl{\'e}n in different situations in a series of papers \cite{Klej08,Kleq08,Kle10,Kle13}. 
In this context, we study convexity properties of the Cassinian metric balls
in Section~\ref{sec4}. 
Section~\ref{sec5} is devoted to the inclusion properties of the Cassinian metric balls with other related metric balls.

\section{Common notation and terminology}\label{sec2}
Throughout the paper we use the notation $\IR^n,n\ge 2,$ for the Euclidean 
$n$-dimensional space; $\overline{\IR^n}:=\IR^n\cup \{\infty\}$ for the 
one point compactification of $\IR^n$. 
The Euclidean distance between $x,y\in\IR^n$ is denoted by $|x-y|$. 
Given $x\in\IR^n$ and $r>0$, the open ball centered at $x$ and radius $r$ is 
denoted by $B^n(x,r):=\{y\in\IR^n\colon\, |x-y|<r\}$. Denote by $\IB^n:=\IB^n(0,1)$, the unit ball in $\IR^n$.
Consequently, we set $\mathbb{H}^n:=\{x=(x_1,x_2,\ldots,x_n)\in\mathbb{R}^n:\,x_n>0\}$,
the upper half-space. 

\subsection*{The Cassinian metric}
Let $D\subsetneq \IR^n$ be an arbitrary domain. 
The {\it Cassinian metric}, $c_D$, on $D$ is defined by
$$c_D (x,y)=\sup_{p\in \partial D} \frac{|x-y|}{|x-p||p-y|} .
$$
Note that the quantity $c_D$ defines a metric on $D$; see \cite[Lemma~3.1]{Ibr09}.
Geometrically, the Cassinian metric can be defined in terms of maximal Cassinian ovals 
(see \cite[Sections 3.1-3.2]{Ibr09} and references therein) in 
the domain $D$ in a similar fashion as the Apollonian metric is defined in terms of 
maximal Apollonian balls \cite{Bea98}. 

We end this section with the definitions of the hyperbolic metric, 
the quasihyperbolic metric and the distance ratio metric used in the subsequent sections.

\subsection*{The hyperbolic metric}
The {\em hyperbolic metric} of the unit ball 
$\Bn$ is defined by
$$ \rho_{\Bn}(x,y)=\inf_{\gamma\in \Gamma(x,y)} \int_{\gamma}\frac{2|dz|}{1-|z|^2},
$$
where $\Gamma(x,y)$ denotes the family of rectifiable curves joining $x$ and $y$ in $\Bn$.

\subsection*{The quasihyperbolic metric}
Let $D\subsetneq \IR^n$ be an arbitrary domain.
The {\em quasihyperbolic metric} \cite{GP76} is defined by
$$k_D(x,y)=\inf_{\gamma\in \Gamma(x,y)} \int_{\gamma} \frac{|dz|}{\delta_D(z)},
$$
where $\Gamma(x,y)$ denotes the family of rectifiable curves joining $x$ and $y$ in $D$
and $\delta_D(z)={\rm dist}\,(z,\partial D)$, the shortest Euclidean distance from 
$z$ to $\partial D$.
The quasihyperbolic metric was introduced by Gehring and Palka in 1976 and subsequently studied by
Gehring and Osgood; see \cite{GO79,GP76}, as a generalization of the
hyperbolic metric of the upper half plane to arbitrary proper subdomains of $\Rn$.

\subsection*{The distance ratio metric}
Let $D\subsetneq \IR^n$. For any two points $x,y\in D$, the {\em distance ratio metric}, 
$j_D(x,y)$, is defined as
$$j_D(x,y)=\log \left(1+\frac{|x-y|}{\delta_D(x)\wedge\delta_D(y)}\right),
$$  
where $\delta_D(x)\wedge\delta_D(y)=\min\{\delta_D(x),\delta_D(y)\}$. This form of the metric $j_D$, 
which was first considered by
Vuorinen in \cite{Vuo85}, is a slight modification of the original distance ratio metric 
introduced by Gehring and Osgood in \cite{GO79}. 
This metric has been widely studied in the literature; see, for instance, \cite{Vuo88}.


\section{Distortion Property of the Cassinian Metric under M\"obius Transformations}\label{sec3}

One of our objectives in this section is to study the distortion property of the Cassinian metric 
under M\"obius maps from a punctured ball onto another punctured ball.
Distortion properties of the Cassinian metric of the unit ball under M\"obius maps has been
recently studied in \cite{IMSZ}.
\begin{theorem}\label{lip-bn}
Let $a\in \mathbb{B}^n$ and $f:\mathbb{B}^n\setminus\{0\} \to \mathbb{B}^n\setminus\{a\}$ 
be a M\"obius map with $f(0)=a$. Then for $x,y\in \mathbb{B}^n\setminus\{0\}$ we have
$$ \frac{1-|a|}{1+|a|}c_{\mathbb{B}^n\setminus \{0\}}(x,y)\le c_{\mathbb{B}^n\setminus\{a\}}(f(x),f(y))\le \frac{1+|a|}{1-|a|}
c_{\mathbb{B}^n\setminus\{0\}}(x,y).
$$
The equalities in both sides can be attained.
\end{theorem}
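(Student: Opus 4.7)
The plan is to reduce the estimate to a uniform two-sided bound on the conformal scaling factor of $f$ along $\partial(\mathbb{B}^n\setminus\{0\})$, exploiting the classical M\"obius distortion identity. Since $f$ is a M\"obius map between two punctured balls with $f(0)=a$, it extends to a M\"obius self-map of $\mathbb{B}^n$ that maps $S^{n-1}$ onto $S^{n-1}$. Thus $f$ bijects $\partial(\mathbb{B}^n\setminus\{0\})=S^{n-1}\cup\{0\}$ onto $S^{n-1}\cup\{a\}=\partial(\mathbb{B}^n\setminus\{a\})$, and the supremum defining $c_{\mathbb{B}^n\setminus\{a\}}$ at $(f(x),f(y))$ can be re-indexed over $p\in S^{n-1}\cup\{0\}$ via $p\mapsto f(p)$.

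Next I would apply the standard M\"obius identity
$$|f(u)-f(v)|^{2}=|f'(u)|\,|f'(v)|\,|u-v|^{2},$$
which holds on $\overline{\mathbb{B}^n}$ since $f$ has no pole there and $|f'|$ denotes the conformal scaling factor. Using this on each of the pairs $(x,y),(x,p),(p,y)$, the factors $|f'(x)|$ and $|f'(y)|$ cancel in the Cassinian quotient, giving
$$\frac{|f(x)-f(y)|}{|f(x)-f(p)|\,|f(p)-f(y)|}=\frac{1}{|f'(p)|}\cdot\frac{|x-y|}{|x-p|\,|p-y|}.$$
Taking the supremum in $p$ therefore yields
$$c_{\mathbb{B}^n\setminus\{a\}}(f(x),f(y))=\sup_{p\in S^{n-1}\cup\{0\}}\frac{1}{|f'(p)|}\cdot\frac{|x-y|}{|x-p|\,|p-y|},$$
so the theorem reduces to bounding $|f'|$ uniformly above and below on $S^{n-1}\cup\{0\}$.

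To control $|f'|$ on the boundary, I would decompose $f=\sigma_{a}\circ R$, where $\sigma_{a}$ is the standard involutive M\"obius self-map of $\mathbb{B}^n$ with $\sigma_{a}(0)=a$ and $R\in O(n)$ (indeed $\sigma_{a}\circ f$ is a M\"obius self-map of $\mathbb{B}^n$ fixing $0$, hence orthogonal). From the explicit formula
$$|\sigma_{a}'(u)|=\frac{1-|a|^{2}}{1-2\langle a,u\rangle+|a|^{2}|u|^{2}},$$
the chain rule gives $|f'(0)|=1-|a|^{2}$ and, for $p\in S^{n-1}$, $|f'(p)|=(1-|a|^{2})/|R(p)-a|^{2}$. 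Since $R(p)\in S^{n-1}$ and therefore $1-|a|\le|R(p)-a|\le 1+|a|$, every value of $|f'|$ on $S^{n-1}\cup\{0\}$ lies in the interval $[(1-|a|)/(1+|a|),(1+|a|)/(1-|a|)]$. Substituting these bounds into the formula for $c_{\mathbb{B}^n\setminus\{a\}}(f(x),f(y))$ yields both inequalities of the theorem simultaneously.

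For sharpness, I would take $f=\sigma_{a}$ with $a=te_{1}$, $t\in(0,1)$, and let $x,y$ approach a unit vector $p_{0}\in S^{n-1}$ along the radial segment from the origin. Choosing $p_{0}=-e_{1}$ makes $|f'(p_{0})|$ attain its minimum $(1-|a|)/(1+|a|)$, while the domain-side Cassinian supremum is asymptotically concentrated at $p=-e_{1}$, so the ratio $c_{\mathbb{B}^n\setminus\{a\}}(f(x),f(y))/c_{\mathbb{B}^n\setminus\{0\}}(x,y)$ tends to $(1+|a|)/(1-|a|)$. The symmetric choice $p_{0}=e_{1}$, where $|f'(p_{0})|=(1+|a|)/(1-|a|)$, realises the lower constant. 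The main obstacle is to show rigorously that along the chosen sequences the two suprema, namely the Cassinian supremum on the domain and the extremum of $|f'|$, are attained simultaneously at the same boundary point in the limit; this is why explicit radial sequences, rather than abstract arguments, are needed.
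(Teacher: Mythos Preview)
Your argument for the two-sided inequality is correct and in fact more streamlined than the paper's. Both proofs factor $f$ as an orthogonal map followed by a canonical M\"obius self-map of $\mathbb{B}^n$ sending $0$ to $a$; but whereas the paper then splits into two cases (supremum attained at the puncture versus on $S^{n-1}$) and invokes an external reference for the second case, your use of the identity $|f(u)-f(v)|^{2}=|f'(u)|\,|f'(v)|\,|u-v|^{2}$ collapses both cases into a single uniform estimate on $|f'(p)|$ over $S^{n-1}\cup\{0\}$. This is cleaner and self-contained, and the computation of the range of $|f'|$ is correct.

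The sharpness part, however, does not quite deliver what the statement asserts, namely that the equalities \emph{can be attained}. Your limiting argument only shows that the ratio
\[
c_{\mathbb{B}^n\setminus\{a\}}(f(x),f(y))\big/c_{\mathbb{B}^n\setminus\{0\}}(x,y)
\]
tends to $(1+|a|)/(1-|a|)$ along a sequence, which establishes that the constant is optimal but not that it is realised at an actual pair $(x,y)$. The paper instead fixes $a=|a|e_1$, takes $x=-te_1$, $y=-se_1$ with $0<s<t<1$, computes the images explicitly on the segment $[a,e_1]$, and verifies the exact identity
\[
c_{\mathbb{B}^n\setminus\{a\}}(\sigma(x),\sigma(y))=\frac{1+|a|}{1-|a|}\,c_{\mathbb{B}^n\setminus\{0\}}(x,y).
\]
(One also needs, implicitly, $s+t\ge 1$ so that the domain-side Cassinian supremum is attained at $-e_1$ rather than at $0$.) Your radial points are exactly the right choice; rather than sending them to the boundary, a direct computation for fixed $s,t$ along the lines above gives the exact equality the theorem claims and removes the ``main obstacle'' you flag.
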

\begin{proof}
If $a=0$, the proof is trivial (see \cite{IMSZ}). 
Now, assume that $a\neq 0$. Let $\sigma$ be the inversion in the sphere $\mathbb S^{n-1}(a^\star,r)$, where 
$$
a^\star=\frac {a}{|a|^2}\qquad\text{and}\qquad r=\sqrt{|a^\star|^2-1}=\frac {\sqrt{1-|a|^2}}{|a|}.
$$
Note that the sphere $\mathbb S^{n-1}(a^\star,r)$ is orthogonal to $\mathbb{S}^{n-1}$ and that $\sigma(a)=0$. 
In particular, $\sigma$ is a M\"obius map with $\sigma(\IB^n\setminus\{a\})=\IB^n\setminus\{0\}$. 
Recall from \cite{Bea95} that 
\begin{equation}\label{inversionmap}
\sigma(x)=a^\star+\Big(\frac {r}{|x-a^\star|}\Big)^2\big(x-a^\star\big).
\end{equation}
Then $\sigma\circ f$ is an orthogonal matrix (see, for example, \cite[Theorem 3.5.1(i)]{Bea95}). 
In particular,  
\begin{equation}\label{mob1}
\Big|\sigma\big(f(x)\big)-\sigma\big(f(y)\big)\Big|=|x-y|.
\end{equation}
We will need the following property of $\sigma$ (see, for example, \cite[p. 26]{Bea95}):
\begin{equation}\label{mob2}
|\sigma(x)-\sigma(y)|=\frac {r^2|x-y|}{|x-a^\star||y-a^\star|}.
\end{equation}
Now, 
$$c_{\mathbb{B}^n\setminus\{0\}}(x,y)=\frac{|x-y|}{\min\{|x||y|,
\inf_{z\in \partial \IB^n}|x-z||z-y|\}}
$$
and
$$c_{\mathbb{B}^n\setminus\{a\}}(f(x),f(y))=\frac{|f(x)-f(y)|}{\min\{|f(x)-a||a-f(y)|,
\inf_{w\in \partial \IB^n}|f(x)-w||w-f(y)|\}}.
$$
Denote by $P=\min\{|f(x)-a||a-f(y)|,\inf_{w\in \partial \IB^n}|f(x)-w||w-f(y)|\}$. 
Now we have two choices for $P$.

\noindent
{\bf Case I.} $P=|f(x)-a||a-f(y)|$.

\noindent It follows from (\ref{mob1}) and (\ref{mob2}) that
$$|f(x)-a|=\frac{|f(x)-a^\star||a-a^\star|}{(|a^\star|^2-1)}|x| ~~\mbox{ and }~~
|a-f(y)|=\frac{|f(y)-a^\star||a-a^\star|}{(|a^\star|^2-1)}|y|.
$$
Now, 
$$c_{\mathbb{B}^n\setminus\{a\}}(f(x),f(y))=\frac{|x-y|}{|x||y|}.
\frac{|a^\star|^2-1}{|a-a^\star|^2}\le
\frac{1}{1-|a|^2}\,c_{\mathbb{B}^n\setminus\{0\}}(x,y).
$$

\noindent
{\bf Case II.} $P=\inf_{w\in \partial \IB^n}|f(x)-w||w-f(y)|$.

\noindent Proof of this case follows from the proof of \cite[Theorem~4.1]{IMSZ}.
The lower estimate follows by considering the inverse mapping $f^{-1}$.

To see the sharpness, consider the map $\sigma$ defined by (\ref{inversionmap}).
For $0<s<t<1$, choose the points $u=-te_1$ and $v=-se_1$ in such a way that
$$c_{\IB^n\setminus\{0\}}(x,y)=\frac{t-s}{(1-t)(1-s)}
~~\mbox{ and }~~
c_{\IB^n\setminus\{a\}}(\sigma(x),\sigma(y)) = \frac{|\sigma(x)-\sigma(y)|}{(1-|\sigma(x)|)(1-|\sigma(y)|)}.
$$
The image points of $x$ and $y$ under $\sigma$ is given by
$$\sigma(x)=\frac{|a|+t}{1+|a|t}e_1,\quad \sigma(y)=\frac{|a|+s}{1+|a|s}e_1\in [a,e_1]\setminus\{a,e_1\}.
$$
Now, the Cassinian distance between $\sigma(x)$ and $\sigma(y)$ is
\begin{eqnarray*}
c_{\IB^n\setminus\{a\}}(\sigma(x),\sigma(y)) &=& \frac{|\sigma(x)-\sigma(y)|}{(1-|\sigma(x)|)(1-|\sigma(y)|)}\\
&= & \frac{\left|\ds\frac{|a|+t}{1+|a|t}-\frac{|a|+s}{1+|a|s}\right|}{\left(1-\left|\ds\frac{|a|+t}{1+|a|t}\right|\right)
\left(1-\left|\ds\frac{|a|+s}{1+|a|s}\right|\right)}\\
&=& \frac{t-s}{(1-t)(1-s)}.\frac{1+|a|}{1-|a|}\\
&=& \frac{1+|a|}{1-|a|}c_{\IB^n\setminus\{0\}}(x,y). 
\end{eqnarray*}
The lower bound can be seen by considering the inverse of $\sigma$ and hence the conclusion follows.
\end{proof}
\begin{remark}
Gehring and Osgood proved that the quasihyperbolic and the distance ratio metrics are not changed by more than a factor $2$ under M\"obius
transformations (see \cite[Theorem~4]{GO79}).
Naturally, one can ask the similar question for the Cassinian metric. Unfortunately, such a finite constant does not exist for the Cassinian metric in arbitrary proper subdomains of $\Rn$. 
Indeed, the reason is clear from Theorem~\ref{lip-bn} when $|a|\to 1$ (see also \cite[Theorem~4.1]{IMSZ}). However, if we replace M\"obius mappings by bi-Lipschitz mappings of $\Rn$, the following fact guarantees that distortion constant exists.

\noindent {\bf Fact.} Let $f:\Rn\to \Rn$ be an $L$-bilipschitz mapping, that is
$$|x-y|/L \le |f(x)-f(y)|\le L|x-y|
$$
for all $x,y\in \Rn$, which maps $D\subsetneq \Rn$ onto $D'\subsetneq \Rn$.
Then 
$$\frac{1}{L^3} c_{D}(x_1,x_2)\le c_{D'}(f(x_1),f(x_2))\le L^3 c_D(x_1,x_2)
$$
for all $x_1,x_2\in D$.

\end{remark}

\section{Convexity properties of Cassinian metric balls}\label{sec4}
This section focuses on the local convexity
properties of the Cassinian metric ball. 
We define the metric ball as follows: let $(D,d)$
be a metric space. Then the set 
$$B_d(x,R)=\{z\in D:\,d(x,z)<R\}
$$
is called the {\em $d$-metric ball} of the domain $D$. 
A metric ball with respect to the Cassinian metric is called 
a {\em Cassinian (metric) ball}.

Before entering into the discussion on the convexity properties,
we describe the Cassinian ball of a domain $D$
in terms of Cassinian balls of $\mathbb{R}^n\setminus \partial D$ fixing a centre in $D$.
The following proposition is a consequence of the proof of \cite[Theorem~1.1]{Klej08}
with respect to the Cassinian metric. 

\begin{proposition}
Let $D\subsetneq \Rn$ and $x\in D$. Then 
$$B_{c_D}(x,R)=\ds \cap_{z\in \partial D} B_{c_{\Rn\setminus\{z\}}}(x,R).
$$
\end{proposition}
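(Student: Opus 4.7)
The plan is to prove the equality by a straightforward equivalence-of-definitions argument, keyed on an identification of the supremum in the definition of $c_D$ with an intersection of single-point-boundary Cassinian balls. First I would observe that for any single boundary point $z\in\partial D$, the domain $\Rn\setminus\{z\}$ has boundary $\{z\}$ in $\Rn$; the point at infinity in $\overline{\Rn}$ contributes nothing because $|x-\infty||\infty-y|=\infty$ makes that ratio vanish. Hence
\[
c_{\Rn\setminus\{z\}}(x,y)=\frac{|x-y|}{|x-z||z-y|}
\]
is simply the $p=z$ entry of the supremum defining $c_D(x,y)$. Taking the supremum over $z\in\partial D$ on both sides yields the key identity
\[
c_D(x,y)\;=\;\sup_{z\in\partial D} c_{\Rn\setminus\{z\}}(x,y).
\]

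With this identity in hand, I would translate the strict inequality ``$c_D(x,y)<R$'' into the strict inequality ``$c_{\Rn\setminus\{z\}}(x,y)<R$ for every $z\in\partial D$''. The forward implication is immediate because every term is bounded above by the supremum. For the converse I would invoke that $\partial D$ is compact as a subset of $\overline{\Rn}$ (by the paper's topological convention) and that the function $z\mapsto |x-y|/(|x-z||z-y|)$ is continuous on $\partial D$, extending by $0$ at $\infty$. Consequently the supremum is in fact attained at some $z_0\in\partial D$, and strict inequality at every $z$ then forces strict inequality of the supremum. Chaining these equivalences, $y\in B_{c_D}(x,R)$ if and only if $c_{\Rn\setminus\{z\}}(x,y)<R$ for each $z\in\partial D$, i.e.\ $y\in\bigcap_{z\in\partial D} B_{c_{\Rn\setminus\{z\}}}(x,R)$.

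The main obstacle is the attainment-of-supremum step; without it, ``sup $<R$'' and ``pointwise $<R$'' could conceivably diverge, and only the inclusion $B_{c_D}(x,R)\subseteq\bigcap_z B_{c_{\Rn\setminus\{z\}}}(x,R)$ would be automatic. Compactness of $\partial D$ in $\overline{\Rn}$ together with continuity of the Cassinian integrand is exactly what secures the strict version of the inequality and hence the claimed equality; the remaining content of the proof is bookkeeping about definitions of metric balls.
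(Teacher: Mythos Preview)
Your proof is correct and follows essentially the same route as the paper: both reduce to the identity $c_D(x,y)=\sup_{z\in\partial D}c_{\Rn\setminus\{z\}}(x,y)$ and then check the two inclusions. Your argument is in fact more careful than the paper's, since you explicitly justify, via compactness of $\partial D$ in $\overline{\Rn}$ and continuity of the integrand, the attainment of the supremum that the paper tacitly assumes (and slightly garbles with a min/max typo).
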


\begin{proof}
Suppose that $y\in \ds\cap_{z\in \partial D} B_{c_{\Rn\setminus\{z\}}}(x,R)$. Then 
$c_{\Rn\setminus\{z\}}(x,y)<R$ for all $z\in \partial D$. Choose $z'\in \partial D$
such that
$$c_{\Rn\setminus\{z'\}}(x,y)=\min_{z\in \partial D}c_{\Rn\setminus\{z\}}(x,y).
$$
As $z'\in \partial D$, it is clear that $c_D(x,y)\le c_{\Rn\setminus\{z'\}}(x,y)$ and 
hence $c_D(x,y)<R$. 
Hence $B_{c_D}(x,R)\supseteq \ds\cap_{z\in \partial D} B_{c_{\Rn\setminus\{z\}}}(x,R)$.
Conversely, let $y\in B_{c_D}(x,R)$ and suppose that $z^*\in \partial D$ such that 
$$|x-z^*||z^*-y|=\inf_{z\in \partial D}|x-z||z-y|.
$$
Clearly, $c_{\Rn\setminus\{z'\}}(x,y)(x,y)\le c_D(x,y)<R$. Hence 
$B_{c_D}(x,R)\subseteq \ds\cap_{z\in \partial D} B_{c_{\Rn\setminus\{z\}}}(x,R)$ and the 
proof is complete.
\end{proof}

Let $D\subset \IR^n$ be a domain. We say that $D$ is 
{\em convex} if the line segment $[x,y]$ joining any pair of points $x$ and $y$ 
entirely contained in $D$. A domain $D$ is {\em strictly convex} 
if $[x',y']\cap \partial D=\{x',y'\}$ for any pair of points 
$x',y'\in \partial D$.

We now begin with studying local convexity properties of the Cassinian ball.
For $n=2$, we call these Cassinian balls as {\em the Cassinian disks}. 

\begin{theorem}\label{loc-con}
Let $x\in \mathbb{R}^2\setminus \{0\}$. Then
\begin{itemize}
\item[{\bf (a)}] the Cassinian disk $B_{c_{\mathbb{R}^2\setminus \{0\}}}(x,R)$ is 
convex if and only if $R\in (0,1]$.
\item[{\bf (b)}] the Cassinian disk $B_{c_{\mathbb{R}^2\setminus \{0\}}}(x,R)$ is 
strictly convex if and only if $R\in (0,1)$.
\end{itemize}
\end{theorem}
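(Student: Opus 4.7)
The plan is to use the inversion $\sigma(z) = z/|z|^2$ in the unit circle to convert the problem into one about Euclidean disks. A direct computation shows that $\sigma$ is an involution of $\mathbb{R}^2\setminus\{0\}$ and that
\[
|\sigma(x)-\sigma(y)| \;=\; \frac{|x-y|}{|x|\,|y|} \;=\; c_{\mathbb{R}^2\setminus\{0\}}(x,y),
\]
so $\sigma$ is a Cassinian-to-Euclidean isometry of $\mathbb{R}^2\setminus\{0\}$. Writing $u := \sigma(x)$ (so that $|u| = 1/|x|$), this isometry turns the Cassinian disk into the $\sigma$-image of a Euclidean disk:
\[
B_{c_{\mathbb{R}^2\setminus\{0\}}}(x,R) \;=\; \sigma\!\bigl(B^{2}(u,R)\setminus\{0\}\bigr).
\]

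I would then do a case analysis according to the position of the origin (the center of inversion) relative to the Euclidean disk $B^{2}(u,R)$. Because $\sigma$ is a M\"obius transformation it maps generalized circles to generalized circles, so the image above is one of three shapes. If the origin lies strictly outside $\overline{B^{2}(u,R)}$, the boundary circle avoids the center of inversion, its image is an ordinary circle, and $\sigma(B^{2}(u,R))$ is a Euclidean disk, hence strictly convex. If the origin lies on the boundary circle, that circle is mapped to a straight line and the image of the open disk is an open half-plane, which is convex but not strictly convex. If the origin lies in the interior of $B^{2}(u,R)$, then the image of $B^{2}(u,R)\setminus\{0\}$ is the complement of a closed Euclidean disk (since $\sigma$ sends the origin to $\infty$), which is not convex.

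Combining these three cases yields the convexity dichotomy: the Cassinian disk is convex exactly when the origin does not lie in the interior of $B^{2}(u,R)$, and strictly convex exactly when the origin lies strictly outside. Rewriting these conditions in terms of the relation between $R$ and $|u|=1/|x|$ produces the claimed ranges for $R$ in (a) and (b). The main technical step to write up carefully is the failure of convexity in the third case; this can be exhibited by pulling back via $\sigma$ a chord of $B^{2}(u,R)$ that passes through the origin, producing two points of the Cassinian disk whose Euclidean segment exits through the deleted neighborhood of $0$.
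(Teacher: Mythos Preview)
Your inversion idea is correct and more conceptual than the paper's route. The paper simply normalizes to $x=1$ and observes that the locus $|z-1|=R|z|$ is an Apollonius circle with center $1/(1-R^{2})$ and radius $R/(1-R^{2})$, from which convexity for $R<1$, degeneration to a half-plane at $R=1$, and nonconvexity for $R>1$ are read off directly. Your argument---using that $\sigma(z)=z/|z|^{2}$ is a Cassinian-to-Euclidean isometry of $\mathbb{R}^{2}\setminus\{0\}$ and then classifying the M\"obius image of $B^{2}(u,R)$ according to the position of the origin---recovers the same trichotomy with essentially no computation.

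There is, however, a genuine slip in your last sentence. Your three cases give convexity exactly when $R\le|u|=1/|x|$ and strict convexity exactly when $R<1/|x|$; these thresholds depend on $|x|$ and equal $1$ only when $|x|=1$. So your analysis does \emph{not} ``produce the claimed ranges'' $(0,1]$ and $(0,1)$ for general $x$. In fact your argument exposes that the theorem as printed (and the paper's ``without loss of generality $x=1$'') is inaccurate: the Cassinian metric on $\mathbb{R}^{2}\setminus\{0\}$ is not dilation-invariant (a dilation by $\lambda$ multiplies it by $1/\lambda$), so the convexity threshold must scale like $1/|x|$. Concretely, for $x=2$ and $R=3/4\in(0,1]$ the Cassinian disk is $\{\,z:|z-2|<\tfrac{3}{2}|z|\,\}=\{\,z:|z+\tfrac{8}{5}|>\tfrac{12}{5}\,\}$, the exterior of a Euclidean disk, hence not convex. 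Your method is sound; you should state the conclusion as $R\in(0,1/|x|]$ (respectively $R\in(0,1/|x|)$) rather than asserting agreement with the printed range.
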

\begin{proof}
{\bf (a)}
Without loss of generality we can assume that $x=1$. 
Let $z$ be an arbitrary point in $\partial B_{c_{\mathbb{R}^2\setminus\{0\}}}(1,R)$.
Consider the circle (with respect to the Cassinian metric) 
$\partial B_{c_{\mathbb{R}^2\setminus\{0\}}}(1,R)$ for a fixed $R$.
From the definition of the Cassinian disk, it follows that 
the boundary $\partial B_{c_{\mathbb{R}^2\setminus\{0\}}}(1,R)$ is an Euclidean circle with
center $1/(1-R^2)$ and radius $R/(1-R^2)$. Therefore, $\partial 
B_{c_{\mathbb{R}^2\setminus\{0\}}}(1,R)$ is convex for $R\le 1$ and not convex for $R>1$.

{\bf (b)} When $R=1$ the center of the above Euclidean circle 
becomes $\infty$, and thus $\partial B_{c_{\mathbb{R}^2\setminus\{0\}}}(x,R)$
is not strictly convex.
\end{proof}
\begin{corollary}Let $x\in \mathbb{R}^n\setminus \{0\}$. Then
\begin{itemize}
\item[{\bf (a)}] the Cassinian ball $B_{c_{\mathbb{R}^n\setminus \{0\}}}(x,R)$ is 
convex if and only if $R\in (0,1]$.
\item[{\bf (b)}] the Cassinian ball $B_{c_{\mathbb{R}^n\setminus \{0\}}}(x,R)$ is
strictly convex if and only if $R\in (0,1)$.
\end{itemize}

\end{corollary}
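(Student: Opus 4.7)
The plan is to re-use the proof of Theorem~\ref{loc-con} almost verbatim, the key observation being that the single boundary point $0$ makes the Cassinian distance entirely explicit in every dimension. Namely, for any $x, y \in \mathbb{R}^n \setminus \{0\}$,
\[
c_{\mathbb{R}^n \setminus \{0\}}(x, y) = \frac{|x-y|}{|x||y|}.
\]
Squaring and rearranging, $y \in B_{c_{\mathbb{R}^n \setminus \{0\}}}(x, R)$ is equivalent to the quadratic inequality
\[
(1 - R^2 |x|^2) |y|^2 - 2\, x \cdot y + |x|^2 < 0.
\]

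The main computational step is to complete the square in $y$ and read off the shape of the ball in the three cases determined by the sign of the leading coefficient $1 - R^2|x|^2$. Under the normalization $|x|=1$ adopted in the proof of Theorem~\ref{loc-con}, one obtains: (i) the Euclidean ball $B^n(x/(1-R^2),\, R/(1-R^2))$ when $R < 1$, which is strictly convex; (ii) the affine half-space $\{y : x \cdot y > 1/2\}$ when $R = 1$, which is convex but not strictly convex; and (iii) the complement of a closed Euclidean ball when $R > 1$, which fails to be convex. These three cases together deliver parts (a) and (b) of the corollary.

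A more conceptual, equivalent route, which is what really makes this a \textit{corollary} of Theorem~\ref{loc-con}, is via rotational symmetry: the formula above shows that $c_{\mathbb{R}^n \setminus \{0\}}(x,y)$ depends on $y$ only through $|y|$ and $x \cdot y$, so $B_{c_{\mathbb{R}^n\setminus\{0\}}}(x,R)$ is invariant under every rotation of $\mathbb{R}^n$ fixing the line $\ell$ through $0$ and $x$. Intersecting with any 2-plane $P$ containing $\ell$ recovers the planar Cassinian disk $B_{c_{P \setminus \{0\}}}(x,R)$, and the $n$-dimensional ball is obtained by rotating this disk about $\ell$.

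The only subtle point is transferring convexity from the 2-dimensional section to the $n$-dimensional surface of revolution. This would be delicate in general, but it is transparent here: the direct computation in the previous paragraph already classifies the $n$-dimensional ball as a Euclidean ball, a half-space, or the exterior of a Euclidean ball, each of which is convex (respectively strictly convex, or not convex) exactly when its planar section is. Parts (a) and (b) therefore follow at once from Theorem~\ref{loc-con}.
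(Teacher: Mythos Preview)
Your proposal is correct and aligns with the paper's treatment: the paper states the corollary without proof, implicitly relying on the fact that the planar computation in Theorem~\ref{loc-con} carries over verbatim (or, equivalently, on the rotational symmetry about the line through $0$ and $x$ that you describe). Your direct quadratic computation in $\mathbb{R}^n$ actually supplies more than the paper does, since it identifies the Cassinian ball explicitly as a Euclidean ball, a half-space, or the exterior of a closed ball, making the convexity conclusions immediate without any need to transfer convexity from planar sections to the full surface of revolution.
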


\begin{figure}[H]
\begin{center}
\includegraphics[height=7cm,width=6.5cm]{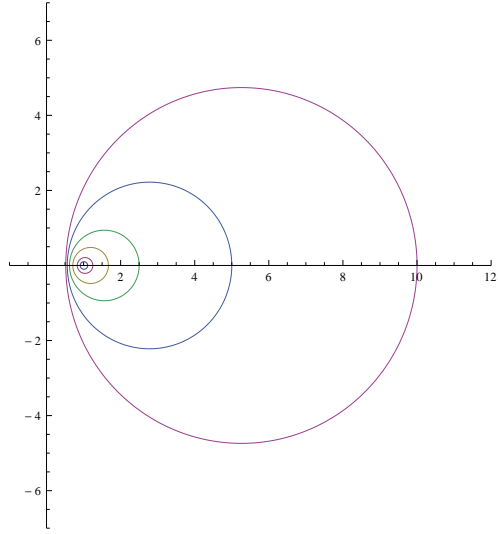}
\caption{The Cassinian disks 
$B_{c_{\mathbb{R}^2\setminus\{0\}}}(1,R)$ with radii $R=0.1,0.2,0.4,0.6,0.8$ and $0.9$.}
\end{center}
\end{figure}

In the  punctured space the Cassinian balls are convex with small radius, but the same is not true in general. The next result shows that this is not the case even in convex domain.

\begin{proposition}\label{local conv Hn}
  Let $r > 0$. There exists $x \in \Hn$ such that $B_{c_\Hn}(x,r)$ is not convex.
\end{proposition}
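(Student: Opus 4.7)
The plan is to reduce to a single concrete example using the scaling property of the Cassinian metric and then exhibit three explicit points in $\Hn$ witnessing the failure of convexity.

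First I would record the dilation identity $c_{\Hn}(\lambda x, \lambda y) = \lambda^{-1}\,c_{\Hn}(x,y)$ for $\lambda > 0$, obtained by substituting $p \mapsto \lambda p$ in the supremum defining $c_{\Hn}$ (the boundary $\partial\Hn$ is a linear subspace invariant under positive dilations). This gives $B_{c_{\Hn}}(\lambda x, r) = \lambda\,B_{c_{\Hn}}(x, \lambda r)$; since positive dilations preserve convexity, it suffices to produce one pair $(x_0, \rho_0)$ for which $B_{c_{\Hn}}(x_0,\rho_0)$ is not convex, and for a given $r > 0$ one then takes $x := (\rho_0/r)\,x_0$. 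I would fix $x_0 := e_n$, $\rho_0 := 2$, and use the rotational symmetry around the $e_n$-axis to restrict all computations to the $2$-plane spanned by $e_1$ and $e_n$, effectively working in $\Ht$.

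Set
\[
y_1 := 100\,e_1 + e_n,\qquad y_2 := \tfrac{2}{5}\,e_n,\qquad y_\ast := \tfrac{1}{10}\,y_1 + \tfrac{9}{10}\,y_2 = 10\,e_1 + \tfrac{23}{50}\,e_n.
\]
The goal is to verify: (i) $c_{\Hn}(x_0,y_1) = 1$; (ii) $c_{\Hn}(x_0,y_2) = 3/2$; (iii) $c_{\Hn}(x_0,y_\ast) > 2$. For (i), I would minimize $f(p_1) := (p_1^2+1)\bigl((100-p_1)^2+1\bigr)$ over $p_1\in\IR$; the derivative factors as $f'(p_1) = 2(100-2p_1)\bigl(p_1(100-p_1)-1\bigr)$, so the nontrivial critical equation is $p_1(100-p_1)=1$. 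At such $p_1$ one has $p_1^2+1 = p_1^2 + p_1(100-p_1) = 100\,p_1$ and likewise $(100-p_1)^2+1 = 100(100-p_1)$, whence $f = 10^4\cdot p_1(100-p_1) = 10^4$; since $f(50) = 2501^2$ is a local maximum, $\min f = 10^4$ and $c_{\Hn}(x_0,y_1) = 100/\sqrt{10^4} = 1$. For (ii), $x_0$ and $y_2$ share their $e_1$-coordinate $0$, so $|x_0-p||p-y_2|$ is monotone in $|p_1|$ with minimum value $1\cdot\tfrac{2}{5}=\tfrac{2}{5}$ at $p=0$, giving $c = (3/5)/(2/5)$. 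For (iii), only a single $p$ is needed to bound $c$ from below; take $p := 10\,e_1$, the foot of $y_\ast$ on $\partial\Hn$. The desired $|x_0-y_\ast|^2 > 4\,|x_0-p|^2\,|p-y_\ast|^2$ reduces to $100 + (27/50)^2 > 4\cdot 101 \cdot (23/50)^2$, i.e.\ $100.2916 > 85.4864$, which is immediate.

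Combining the three items shows $y_1, y_2 \in B_{c_{\Hn}}(x_0,2)$ but the convex combination $y_\ast$ is not, so the ball is not convex; scaling by $2/r$ then yields the proposition for arbitrary $r>0$. The only delicate step is (i): because the horizontal separation $100$ greatly exceeds twice the common height $1$, the minimum of the product $|x_0-p||p-y_1|$ is \emph{not} attained at the perpendicular-bisector foot $p_1=50$ (which is actually a local maximum of $f$), but at the off-axis roots $p_1 = 50\pm\sqrt{2499}$ of $p_1(100-p_1)=1$; missing the correct factorization of $f'$ would produce a spuriously larger value of $c$, and recognizing it is the main subtlety of the argument.
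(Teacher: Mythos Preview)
Your argument is correct and follows the same overall strategy as the paper: use the dilation invariance of $\Hn$ to reduce to a single radius, then exhibit two explicit points in a Cassinian ball whose connecting segment leaves the ball. The paper also (implicitly) scales by $1/r$, taking $x=i/r$, and then chooses the simpler pair $y_1=\tfrac{i}{2r}$, $y_2=\tfrac{2+i}{r}$, both lying exactly on $\partial B_{c_{\Ht}}(x,r)$, and checks that their \emph{midpoint} has Cassinian distance $>r$ via a single test boundary point. Your example instead takes interior points $y_1,y_2$ at Cassinian distances $1$ and $3/2$ from $e_n$ and a weighted combination $\tfrac{1}{10}y_1+\tfrac{9}{10}y_2$; this forces you to carry out the full minimization for $c_{\Hn}(e_n,100e_1+e_n)$ and to notice that the minimizing boundary point is \emph{not} the perpendicular-bisector foot---a genuine subtlety you handle cleanly via the factorization $f'(p_1)=2(100-2p_1)\bigl(p_1(100-p_1)-1\bigr)$. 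The paper's choices avoid this complication (for $|x-y_2|=2/r$ with heights $1/r$ the two critical-point conditions coincide at $p_1=1/r$), so its computation is shorter; your version, while more elaborate, gives a bit more insight into the geometry of Cassinian ovals in $\Hn$.
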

\begin{proof}
  It is sufficient to consider the case $n=2$. For a given $r$ we choose $x=i/r$ and consider Cassinian disk $B_{c_\Ht}(x,r)$ with radius $r$. To show that $B_{c_\Ht}(x,r)$ is not convex we choose two points $y_1$ and $y_2$ such that $c_\Ht(x,y_1)=r=c_\Ht(x,y_2)$ and $c_\Ht(x,(y_1+y_2)/2)>r$.
  
  We choose $y_1 = \frac{i}{2r}$. Now by the geometry of the Cassinian ovals
  \[
    c_\Ht(x,y_1) = \frac{|x-y_1|}{|x-0||0-y_1|} = \frac{\frac{1}{2r}}{\frac{1}{r}\frac{1}{2r}} = r.
  \]
  
  Let $y_2 = (2+i)/r$. Again by the geometry of the Cassinian ovals
  \[
    c_\Ht(x,y_2) = \frac{|x-y_2|}{|x-1/r||1/r-y_2|} = \frac{\frac{2}{r}}{\sqrt{1/r^2+1/r^2} \sqrt{1/r^2+1/r^2}} = r.
  \]
  
  Now $y_3=(y_1+y_2)/2 = \frac{1}{r}+\frac{3i}{4r}$ and we choose $z=\frac{2}{3r}$. We obtain
  \[
    |x-y_3| = \frac{\sqrt{17}}{4r}, \quad |x-z|=\frac{\sqrt{13}}{3r}, \quad |z-y| = \frac{\sqrt{97}}{12r}
  \]
  and thus
  \[
    c_\Ht(x,y_3) \ge \frac{|x-y_3|}{|x-z||z-y_3|} = 9 \sqrt{ \frac{17}{1261} } r = 1.04498 \dots r > r. \qedhere
  \]
\end{proof}

\begin{figure}[H]
\begin{center}
\includegraphics[width=10cm]{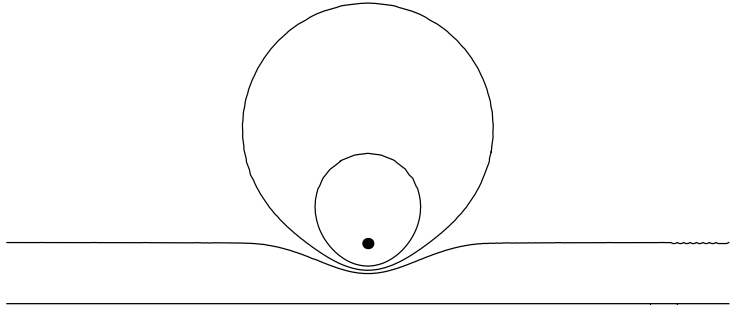}
\caption{The Cassinian disks 
$B_{c_{\Ht}}(i,R)$ with radii $R=0.6,0.8$ and $1.0$.}
\end{center}
\end{figure}

The proof of Proposition \ref{local conv Hn} suggest that the radius of convexity for the Cassinian balls $B_{c_\Ht}(x,r)$ in $\Hn$ depends on $\delta_\Ht (x)$ and $r$. We pose the following conjecture.
\begin{conjecture}\label{convexity conj 1}
  There exists $r_0 > 0$ such that $B_{c_\Ht}(x,r)$ is convex for all $x \in \Ht$ and $r \in (0,a \delta_\Ht (x)]$.
\end{conjecture}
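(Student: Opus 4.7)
The plan is to exhibit $B_{c_\Ht}(x,r)$ as an intersection of convex sets whenever $r$ is sufficiently small compared to $\delta_\Ht(x)$.

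First, I would reduce to a normalized centre. Horizontal translations preserve both $\Ht$ and the Cassinian metric, and the scaling $z \mapsto \lambda z$ fixes $\Ht$ and satisfies $c_\Ht(\lambda x, \lambda y) = \lambda^{-1} c_\Ht(x,y)$ (by substituting $p = \lambda q$ in the defining supremum) while $\delta_\Ht(\lambda x) = \lambda\, \delta_\Ht(x)$. So it suffices to find a universal $r_0>0$ such that $B_{c_\Ht}(i,r)$ is convex for every $r\le r_0$, where $i=(0,1)$.

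Second, I would rewrite the ball as
\[
B_{c_\Ht}(i,r)=\Ht\cap\bigcap_{t\in\R}A_t,\qquad A_t:=\{y\in\R^2:|y-i|<r\sqrt{t^2+1}\,|y-t|\};
\]
this uses that the supremum in the definition of $c_\Ht(i,y)$ is attained, since $t\mapsto |i-y|/(\sqrt{t^2+1}|y-t|)$ is continuous on $\R$ and vanishes at infinity. Each $A_t$ is an Apollonius sublevel set with foci $i$ and $t$: an open Euclidean disk when $r\sqrt{t^2+1}<1$, a half-plane when $r\sqrt{t^2+1}=1$, and the (non-convex) exterior of a closed disk when $r\sqrt{t^2+1}>1$.

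Third, I would show that only a bounded range of $t$ matters. The constraint $y\in A_0$ reads $|y-i|<r|y|$, which combined with $|y|\le 1+|y-i|$ yields $|y-i|<r/(1-r)\le 2r$ for $r\le 1/2$. Fixing $T_0:=3$, for $|t|\ge T_0$ one has $|y-t|\ge|t|-|y|\ge|t|-1-2r$, and hence
\[
r\sqrt{t^2+1}\,|y-t|\ge r\sqrt{T_0^2+1}\,(T_0-1-2r)=2r\sqrt{10}\,(1-r)>2r>|y-i|,
\]
where the strict inequality $2r\sqrt{10}(1-r)>2r$ holds whenever $r<1-1/\sqrt{10}$. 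Thus $A_0\subseteq A_t$ for every $|t|\ge T_0$, so $\bigcap_{t\in\R}A_t=\bigcap_{|t|\le T_0}A_t$. Choosing $r_0:=1/\sqrt{T_0^2+1}=1/\sqrt{10}$, every surviving $A_t$ satisfies $r\sqrt{t^2+1}\le 1$ and is therefore a disk or half-plane; intersecting these convex sets with the convex set $\Ht$ yields a convex ball, which de-normalizes to give the conjecture with $a=1/\sqrt{10}$.

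The main obstacle lies in the third step: justifying the uniform truncation that replaces the full intersection over $t\in\R$ by an intersection over the convex members of the family. The estimates themselves are elementary, but the key observation is that the constraint at $t=0$ alone forces $y$ into a Euclidean $2r$-neighbourhood of $i$, after which the distant (non-convex) constraints $A_t$ with $|t|$ large become automatic. Sharpening $T_0$ and $r_0$ would refine the numerical value of $a$, but the argument above suffices to establish the existence claimed in the conjecture.
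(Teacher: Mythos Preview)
The paper does \emph{not} prove this statement; it is posed as an open conjecture, with only numerical evidence (suggesting a sharp constant $a\approx 0.85$) offered in its support. So there is no ``paper's own proof'' to compare against.

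Your proposal, however, is a genuine and correct proof of the conjecture with the explicit constant $a=1/\sqrt{10}$. The three ingredients are all sound:
\begin{itemize}
\item The normalization by translation and dilation is valid: the identity $c_{\Ht}(\lambda x,\lambda y)=\lambda^{-1}c_{\Ht}(x,y)$ together with $\delta_{\Ht}(\lambda x)=\lambda\delta_{\Ht}(x)$ reduces the statement to the single centre $x=i$, exactly as you say.
\item The representation $B_{c_{\Ht}}(i,r)=\Ht\cap\bigcap_{t\in\R}A_t$ is correct because the supremum defining $c_{\Ht}(i,y)$ is attained (continuity in $t$ plus decay at infinity), and your identification of $A_t$ as an Apollonius region---an open disk when $r\sqrt{t^2+1}<1$, a half-plane when $r\sqrt{t^2+1}=1$, and a non-convex disk-complement when $r\sqrt{t^2+1}>1$---is the standard dichotomy.
\item The truncation step is the heart of the argument and it works: from $y\in A_0$ and $r\le 1/2$ you get $|y-i|<2r$, hence $|y|<1+2r$; then for $|t|\ge 3$ one has $\sqrt{t^2+1}\,|y-t|\ge \sqrt{10}\,(2-2r)$, and the requirement $2r\sqrt{10}(1-r)\ge 2r$ reduces to $r\le 1-1/\sqrt{10}$, which your choice $r_0=1/\sqrt{10}$ comfortably satisfies. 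With $|t|\le 3$ and $r\le 1/\sqrt{10}$ one has $r\sqrt{t^2+1}\le 1$, so every surviving $A_t$ is convex.
\end{itemize}
Thus $B_{c_{\Ht}}(i,r)$ is an intersection of convex sets for all $r\le 1/\sqrt{10}$, and de-normalization gives the conjecture with $a=1/\sqrt{10}\approx 0.316$. This is well below the numerically observed threshold $a\approx 0.85$, but the conjecture only asks for existence, which you have established. You have in fact resolved an open problem from the paper.
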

Computer experiments suggest that Conjecture \ref{convexity conj 1} holds true for constant $a \approx 0.85$. We pose also some more general conjectures.
\begin{conjecture}
  In a bounded convex domain $G$, $B_{c_G}(x,r)$ is convex for all $x \in G$ and $r>0$.
\end{conjecture}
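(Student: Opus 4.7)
The plan is to express the Cassinian ball as an intersection of Apollonius-type regions, sort these by whether they are convex or not, and use the convexity of $G$ to handle the non-convex pieces. For each $p \in \partial G$ set
$$A_p := \{y \in \mathbb{R}^n : |y-x| < r|x-p|\,|y-p|\}, \qquad k_p := r|x-p|,$$
so that $B_{c_G}(x,r) = \bigcap_{p \in \partial G} A_p$. A completing-the-square calculation shows that $A_p$ is a Euclidean ball containing $x$ when $k_p < 1$, a half-space containing $x$ when $k_p = 1$, and the exterior of a closed Euclidean ball $E_p \ni p$ when $k_p > 1$. Partitioning $\partial G$ into $P_g := \{p \in \partial G : k_p \le 1\}$ and $P_b := \{p \in \partial G : k_p > 1\}$, the set $C := G \cap \bigcap_{p \in P_g} A_p$ is convex by construction, and
$$B_{c_G}(x, r) \;=\; C \setminus \bigcup_{p \in P_b} E_p.$$

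The heart of the proof is then to show that this subtraction preserves convexity. As $p$ traces $P_b \subset \partial G$, the centers of the $E_p$ lie on the curve $p \mapsto k_p^2 p/(k_p^2 - 1)$, which wraps around $x$ outside $G$, and I would try to show that the envelope of $\{\partial E_p : p \in P_b\}$ inside $C$ forms a convex hypersurface; then $C \setminus \bigcup_{p \in P_b} E_p$ would be bounded by pieces of $\partial G$, of the convex spheres $\partial A_p$ with $p \in P_g$, and of this envelope, and hence convex. A guiding model is $G = \mathbb{B}^n$ with $x = 0$ and $r > 1$: here $k_p = r$ is constant, each $E_p$ is a ball of radius $r/(r^2 - 1)$ centered at $r^2 p/(r^2 - 1)$, and their union over $p \in \partial \mathbb{B}^n$ is exactly the annulus $r/(r+1) \le |y| \le r/(r-1)$, whose complement in $\mathbb{B}^n$ is the convex Euclidean ball $\{|y| < r/(r+1)\} = B_{c_G}(0,r)$.

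The principal obstacle is controlling the family $\{E_p\}_{p \in P_b}$ of non-convex Apollonius exteriors, since a single non-convex constraint is usually enough to destroy convexity of an intersection. The proof must therefore genuinely exploit both the boundedness of $G$, which keeps $\{E_p\}$ a compact family with a well-behaved envelope, and the convexity of $\partial G$, which forces the centers and radii of the $E_p$ to vary monotonically as $p$ traverses $\partial G$. A natural intermediate target is the planar case $n = 2$ with $\partial G$ a convex $C^2$ Jordan curve, where the envelope reduces to a single planar curve whose convexity can be checked via the sign of its curvature; the model calculation above suggests that the envelope always produces a convex inner boundary. Extending the argument to bounded convex $G$ without radial symmetry, and ultimately to polygonal $G$ where several $E_p$ can coalesce, is where I expect the real work to lie.
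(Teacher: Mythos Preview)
The statement you are addressing is posed in the paper as a \emph{conjecture}, not as a theorem; the paper offers no proof and no proof sketch, so there is nothing to compare your argument against.

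Your decomposition $B_{c_G}(x,r)=\bigcap_{p\in\partial G}A_p$ with $A_p=\{y:|y-x|<k_p|y-p|\}$, $k_p=r|x-p|$, is correct and is a sensible starting point, and your identification of each $A_p$ as an Apollonius ball, half-space, or ball-complement according to whether $k_p<1$, $=1$, or $>1$ is also correct (though for $x\neq 0$ the centre of $E_p$ is $(k_p^{2}p-x)/(k_p^{2}-1)$, not $k_p^{2}p/(k_p^{2}-1)$). The model computation for $G=\mathbb{B}^n$, $x=0$ is fine.

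The genuine gap, which you yourself flag, is that the whole weight of the argument rests on the claim that the inner envelope of the family $\{\partial E_p:p\in P_b\}$ is a convex hypersurface, and you give no mechanism for this beyond the radially symmetric example. In that example the family $\{E_p\}$ is an orbit under $O(n)$ and the envelope is forced to be a sphere; for an arbitrary bounded convex $G$ (even a smooth planar one with $x$ off-centre) both $k_p$ and the centre of $E_p$ vary non-trivially with $p$, and there is no a priori monotonicity or curvature bound that guarantees a convex envelope. Removing a family of closed balls from a convex set can easily produce a non-convex remainder, so something specific to the Cassinian geometry has to be used; your outline does not yet isolate what that something is. As written this is a plausible plan of attack on an open problem, not a proof, and the paper itself treats the statement as open.
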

\begin{conjecture}
  Let $r>0$. If $G$ is a starlike domain with respect to $x \in G$, then $B_{c_G}(x,r)$ is starlike with respect to $x$.
\end{conjecture}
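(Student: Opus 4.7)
The plan is to invoke the decomposition from the preceding proposition, $B_{c_G}(x,r) = \bigcap_{p \in \partial G} B_{c_{\mathbb{R}^n \setminus \{p\}}}(x,r)$. Since an arbitrary intersection of sets each starlike with respect to a common point $x$ is itself starlike with respect to $x$, it would be enough to show that for every $p\in\partial G$ the single-point-punctured Cassinian ball $B_{c_{\mathbb{R}^n\setminus\{p\}}}(x,r)$ contains the whole segment $[x,y]$ whenever it contains the endpoint $y$.

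Set $z_t := x + t(y-x)$ and $A := r|x-p|$. A direct computation (using the inversion in the unit sphere centered at $p$) identifies the sublevel set $B_{c_{\mathbb{R}^n\setminus\{p\}}}(x,r)=\{z:|x-z|<A|z-p|\}$ as a Euclidean ball containing $x$ when $A<1$, a half-space containing $x$ when $A=1$, and the complement of a closed Euclidean ball separating $p$ from $x$ when $A>1$. The first two cases are convex, so starlikeness along $[x,y]$ is automatic. In the third case the pointwise containment $z_t\in B_{c_{\mathbb{R}^n\setminus\{p\}}}(x,r)$ would follow from the inequality $|z_t-p|\ge t|y-p|$, which after substituting $z_t-p=(1-t)(x-p)+t(y-p)$ and squaring is equivalent to
\[
(1-t)\,|x-p|^{2}+2t\,(x-p)\cdot(y-p)\ge 0\quad\text{for all }t\in[0,1),
\]
or equivalently to the angle $\angle xpy$ being non-obtuse.

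The obstacle is precisely the obtuse configuration $(x-p)\cdot(y-p)<0$, in which the displayed inequality fails for $t$ close to $1$ and the corresponding ball-exterior is not starlike with respect to $x$ in isolation. To save the conclusion one has to exploit the global starlikeness of $G$: since $[x,y]\subset G$, the collection of admissible boundary points $p$ is constrained, and the hope is that any ``bad'' $p$ producing an obtuse obstruction at some $z_t$ is strictly dominated, at the endpoint $y$, by another boundary point $p'\in\partial G$, so that the supremum $c_G(x,z_t)$ remains bounded by $c_G(x,y)<r$. Producing this compensating $p'$, or instead classifying the starlike domains $G$ in which the obtuse configuration cannot occur, is where I expect the genuine difficulty to lie, and is presumably why the authors present the statement only as a conjecture.
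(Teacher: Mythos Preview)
The statement you are attempting to prove is explicitly presented in the paper as an \emph{open conjecture}; the authors give no proof at all. So there is nothing against which to compare your argument --- you are not reproving a theorem but attacking an unsolved problem.

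Your reduction via the decomposition $B_{c_G}(x,r)=\bigcap_{p\in\partial G}B_{c_{\mathbb{R}^n\setminus\{p\}}}(x,r)$ is legitimate for points of $G$ (and the segment $[x,y]$ does lie in $G$ by the starlikeness hypothesis), and your Apollonian analysis of the single-puncture balls is correct: for $A=r|x-p|\le1$ each such set is convex, while for $A>1$ it is the exterior of a Euclidean ball around $p$, which is \emph{not} in general starlike with respect to $x$. Your derived sufficient condition $|z_t-p|\ge t\,|y-p|$ and its failure precisely when $\angle xpy$ is obtuse are also right.

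The gap you identify is genuine and is exactly the obstacle: starlikeness of the intersection cannot be obtained factor-by-factor, and recovering the bound $c_G(x,z_t)<r$ requires either (i) showing that for $p\in\partial G$ the obtuse configuration is impossible when $[x,y]\subset G$ --- which is false in general starlike domains --- or (ii) a compensation argument exploiting the full supremum over $\partial G$ rather than a single $p$. Neither route is carried out, and you correctly flag this. In short, what you have is a clean reformulation of the difficulty, not a proof; that is consistent with the paper leaving the assertion as a conjecture.
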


\section{Inclusion properties}\label{sec5}
The purpose of this section is to study inclusion properties of the Cassinian metric balls
in proper subdomains of $\mathbb{R}^n$ with other related metric balls. 
In other words, we consider the problems of the following type: 
{\em for given $x\in D\subsetneq \IR^n$ and $t>0$, we find optimal radii $r,R>0$ 
depending only on $x$ and $t$ such that
\begin{equation}\label{inclusion}
B_{d}(x,r)\subset B_{c}(x,t)\subset B_{d}(x,R),
\end{equation}
where $d$ is a metric other than the Cassinian metric defined on $D$.}

We begin with proving the relation (\ref{inclusion}) when $d$ is the Euclidean metric.
\begin{theorem}\label{c+euclidean}
Let $D\subsetneq \IR^n$ and $x\in D$. Assume that $1-t\delta_D(x)\neq 0$ for $t>0$.
Then the following inclusion property holds:
$$B^n(x,r)\subset B_c(x,t)\subset B^n(x,R)
$$
where $r=\ds\frac{t(\delta_D(x))^2}{1+t\delta_D(x)}$ and $R=\ds\frac{t(\delta_D(x))^2}{1-t\delta_D(x)}$. The radii $r$ and $R$ are best possible. Moreover, $R/r\to 1$ as $t\to 0$.
\end{theorem}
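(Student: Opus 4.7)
The plan is to reduce the inclusion to the two-sided estimate
\[
  \frac{|x-y|}{\delta(\delta+|x-y|)} \;\le\; c_D(x,y) \;\le\; \frac{|x-y|}{\delta(\delta-|x-y|)},
\]
where $\delta:=\delta_D(x)$ and the upper bound is meaningful under $|x-y|<\delta$. Once this two-sided bound is established, the inclusion $B^n(x,r)\subset B_c(x,t)\subset B^n(x,R)$ follows by solving each half for $|x-y|$. For the right-hand bound I would write $\frac{s}{\delta(\delta-s)}<t$ with $s=|x-y|$, which rearranges to $s(1+t\delta)<t\delta^2$, giving $s<r$; since $r<\delta$ always, the constraint $|x-y|<\delta$ is automatic. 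Symmetrically, $\frac{s}{\delta(\delta+s)}<t$ rearranges to $s(1-t\delta)<t\delta^2$, yielding $s<R$ under the standing assumption $1-t\delta>0$. The ratio $R/r=(1+t\delta)/(1-t\delta)\to 1$ as $t\to 0$ is immediate.

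For the upper bound on $c_D(x,y)$, I would fix an arbitrary $p\in\partial D$ and use the triangle inequality $|p-y|\ge |x-p|-|x-y|$ (valid and positive because $|x-p|\ge \delta>|x-y|$). This gives
\[
  \frac{|x-y|}{|x-p|\,|p-y|}\;\le\; \frac{|x-y|}{|x-p|(|x-p|-|x-y|)}.
\]
A routine calculation shows that $u\mapsto \frac{s}{u(u-s)}$ is strictly decreasing in $u>s>0$, so taking the supremum over $p\in\partial D$ (equivalently, letting $|x-p|\downarrow\delta$) yields the claimed upper estimate. For the lower bound, I would instead pick a sequence $p_n\in\partial D$ with $|x-p_n|\to\delta$ (closed boundary, standard), use $|p_n-y|\le|x-p_n|+|x-y|$, and let $n\to\infty$ in
\[
  c_D(x,y)\;\ge\; \frac{|x-y|}{|x-p_n|\,|p_n-y|}\;\ge\; \frac{|x-y|}{|x-p_n|(|x-p_n|+|x-y|)}.
\]

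Sharpness and caveats. For optimality I would take $D=\mathbb{R}^n\setminus\{0\}$, $x=\delta e_1$ (so $\delta_D(x)=\delta$ and $c_D(x,y)=|x-y|/(|x|\,|y|)$), and test on the line through $0$ and $x$: with $y_-=(\delta-r)e_1$ one computes $c_D(x,y_-)=t$, so the radius $r$ cannot be enlarged; with $y_+=(\delta+R)e_1$ one similarly obtains $c_D(x,y_+)=t$, showing $R$ cannot be shrunk. In both cases the two-sided estimate is attained with equality, which simultaneously confirms that the envelope inequalities above cannot be tightened domain-independently.

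The only real subtlety is the case $1-t\delta<0$, where $R$ becomes negative and the right inclusion is vacuous unless interpreted as the whole space; the hypothesis $1-t\delta_D(x)\ne 0$ excludes the degenerate equality, and the substantive regime is $t\delta_D(x)<1$, which in particular holds for all sufficiently small $t$ (consistent with the stated limit $R/r\to 1$). No single step is technically hard; the main care is in verifying monotonicity of $u\mapsto \frac{s}{u(u-s)}$ and in ensuring that the triangle-inequality substitutions preserve positivity.
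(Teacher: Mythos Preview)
Your proof is correct and follows essentially the same route as the paper: both establish the two-sided estimate $\frac{|x-y|}{\delta(\delta+|x-y|)}\le c_D(x,y)\le \frac{|x-y|}{\delta(\delta-|x-y|)}$, invert each side to obtain $r$ and $R$, and verify sharpness in a punctured space along the line through the puncture. The only cosmetic difference is that for the upper bound the paper invokes domain monotonicity, $c_D(x,y)\le c_{B^n(x,\delta)}(x,y)=\frac{|x-y|}{\delta(\delta-|x-y|)}$, whereas you derive the same inequality directly via the triangle inequality and the monotonicity of $u\mapsto \frac{s}{u(u-s)}$.
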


\begin{proof}
It is clear that for $0\neq x\in D$,
$$\inf_{z\in \partial D}|x-z||z-y|\le \delta_D(x)(\delta_D(x)+|x-y|) .
$$
By the definition of the Cassinian metric $c_D$ we have
$$c_D(x,y)=\frac{|x-y|}{\inf_{z\in \partial D}|x-z||z-y|}\ge \frac{|x-y|}{\delta_D(x)(\delta_D(x)+|x-y|)}
$$
which implies
$$|x-y|\le \frac{c_D(x,y)(\delta_D(x))^2}{1-c_D(x,y)\delta_D(x)}.
$$
Hence the second inclusion holds. Now we prove the first inclusion. Let $y\in B^n(x,t)$.
Then $y\in B^n(x,\delta_D(x))$ and by the monotone property 
$$c_D(x,y)\le c_{B^n(x,\delta_D(x))}(x,y)=\ds\frac{|x-y|}{\delta_D(x)(\delta_D(x)-|x-y|)}.
$$
In particular, if $y\in B^n(x,r)$ with $r=t(\delta_D(x))^2/(1+t\delta_D(x))$, 
then $y\in B_c(x,t)$. Clearly, one can see that 
$$\frac{R}{r}=\frac{1+t\delta_D(x)}{1-t\delta_D(x)}\to 1 \mbox{ as } t\to 0.
$$ 
We finally show that radii $r$ and $R$ are best possible. For this, consider the domain 
$D=\Rn\setminus\{a\}$ and $x\in D$. Let us denote by $l$ the line through points $a$ and $x$. We set $\{ y_1,y_2 \} = \partial B_c(x,t) \cap l$ with $|a-y_1| < |a-y_2|$.
Now
\[
  c_D(x,y_1) = t = \frac{|x-y_1|}{|x-a|(|x-a|-|x-y_1|)}
\]
implying
\[
  |x-y_1| = \frac{t|x-a|^2}{1+t |x-a|}
\]
which shows that $r$ is best possible. Similarly,
\[
  c_D(x,y_2) = t = \frac{|x-y_2|}{|x-a|(|x-a|+|x-y_2|)}
\]
implying
\[
  |x-y_2| = \frac{t|x-a|^2}{1-t |x-a|}
\]
which shows that $R$ is best possible. This completes the proof of our theorem.
\end{proof}



Now we move forward to discuss the inclusion relation (\ref{inclusion}) when
$d=j_D$. 
Recall the following relation proved in \cite{Sei99}:
\begin{lemma}\cite[Theorem~3.8]{Sei99}\label{jb-inclusion}
If $D\subsetneq \Rn$ is open, $x\in D$ and $t>0$, then
$$B^n(x,r)\subset B_j(x,t)\subset B^n(x,R)
$$
where $r=(1-e^{-t})\delta_D(x)$ and $R=(e^t-1)\delta_D(x)$. The formulas for $r$ and $R$
are the best possible expressed in terms of $t$ and $\delta_D(x)$ only.
\end{lemma}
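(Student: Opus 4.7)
The plan is to handle the two inclusions separately and then to exhibit explicit extremal configurations. The key input is the Lipschitz property $|\delta_D(x)-\delta_D(y)|\le |x-y|$ of the distance-to-boundary function, together with the monotonicity of $u\mapsto \log(1+u)$.

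For the outer inclusion $B_j(x,t)\subset B^n(x,R)$, I would simply unfold $j_D(x,y)<t$ as
$$|x-y|<(e^t-1)\bigl(\delta_D(x)\wedge \delta_D(y)\bigr)\le (e^t-1)\delta_D(x)=R,$$
which is essentially a one-line argument and requires nothing beyond the trivial estimate $\delta_D(x)\wedge\delta_D(y)\le\delta_D(x)$.

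The inner inclusion $B^n(x,r)\subset B_j(x,t)$ is the slightly more substantial step, because a priori $\delta_D(y)$ may be much smaller than $\delta_D(x)$. Here I would split into two cases according to whether $\delta_D(y)\ge \delta_D(x)$ or $\delta_D(y)<\delta_D(x)$. In the former, a direct estimate yields $j_D(x,y)=\log(1+|x-y|/\delta_D(x))<\log(2-e^{-t})$, and one concludes via the elementary scalar inequality $2-e^{-t}\le e^t$, which is equivalent to $(e^t-1)(1-e^{-t})\ge 0$. In the latter case, the Lipschitz bound gives $\delta_D(y)\ge \delta_D(x)-|x-y|>e^{-t}\delta_D(x)$, so
$$j_D(x,y)<\log\!\left(1+\frac{(1-e^{-t})\delta_D(x)}{e^{-t}\delta_D(x)}\right)=\log(1+(e^t-1))=t.$$

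For the sharpness claim, it suffices to test the extremes on the half-space $D=\Hn$ with $x=e_n$. Moving $y=e_n+se_1$ tangentially keeps $\delta_D(y)=\delta_D(x)=1$, giving $j_D(x,y)=\log(1+s)$; letting this approach $t$ forces $|x-y|\to (e^t-1)\delta_D(x)$, so $R$ cannot be decreased. Dually, moving $y=(1-s)e_n$ inward along the normal gives $\delta_D(y)=1-s$ and $j_D(x,y)=-\log(1-s)$; letting this approach $t$ forces $|x-y|\to(1-e^{-t})\delta_D(x)$, so $r$ cannot be increased. The only mildly nontrivial part of the whole argument is the case split inside the inner inclusion, together with the auxiliary inequality $2-e^{-t}\le e^t$; everything else is direct manipulation.
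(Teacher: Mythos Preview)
The paper does not actually prove this lemma; it is quoted verbatim from Seittenranta \cite[Theorem~3.8]{Sei99} and used as a black box. So there is no ``paper's own proof'' to compare against.

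That said, your argument is correct and is essentially the standard one. One minor simplification: the case split in the inner inclusion is not really needed, since the single inequality $\delta_D(x)\wedge\delta_D(y)\ge \delta_D(x)-|x-y|$ holds in both cases (trivially when $\delta_D(y)\ge\delta_D(x)$, and by the Lipschitz bound otherwise). From $|x-y|<(1-e^{-t})\delta_D(x)$ one then gets $\delta_D(x)\wedge\delta_D(y)>e^{-t}\delta_D(x)$ directly, and the conclusion $j_D(x,y)<t$ follows in one line; the auxiliary inequality $2-e^{-t}\le e^t$ is then unnecessary. Your sharpness examples in $\Hn$ are fine.
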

In this connection, we prove
\begin{theorem}\label{c-j inclusion}
Let $D\subsetneq \Rn$, $x\in D$ and $t>0$. Then the following holds:
$$B_j(x,r)\subset B_c(x,t)\subset B_j(x,R),
$$
where $r=\ds\log\left(1+\frac{t\delta_D(x)}{1+t\delta_D(x)}\right)$ 
and $R=\ds\frac{t\delta_D(x)}{1-t\delta_D(x)}$. Moreover, $R/r\to 1$ as $t\to 0$.
\end{theorem}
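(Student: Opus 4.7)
The plan is to treat the two inclusions separately. For the left inclusion $B_j(x,r)\subset B_c(x,t)$, I simply chain Lemma~\ref{jb-inclusion} with Theorem~\ref{c+euclidean}: if $y\in B_j(x,r)$ then Lemma~\ref{jb-inclusion} gives $|x-y|<(e^r-1)\delta_D(x)$, and choosing $r=\log\!\bigl(1+\tfrac{t\delta_D(x)}{1+t\delta_D(x)}\bigr)$ makes this Euclidean bound equal to $\tfrac{t\delta_D(x)^2}{1+t\delta_D(x)}$, which by Theorem~\ref{c+euclidean} forces $y\in B_c(x,t)$.

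For the right inclusion $B_c(x,t)\subset B_j(x,R)$, the analogous chaining produces $R$ of the form $\log\!\bigl(\tfrac{1-t\delta_D(x)}{1-2t\delta_D(x)}\bigr)$, which is strictly larger than the target $\tfrac{t\delta_D(x)}{1-t\delta_D(x)}$ and requires the unwanted restriction $t\delta_D(x)<1/2$; sharpening this is the main obstacle. I bypass the Euclidean intermediate and work directly from the pointwise inequality established in the proof of Theorem~\ref{c+euclidean}, namely $c_D(x,y)\ge|x-y|/[\delta_D(x)(\delta_D(x)+|x-y|)]$. Interchanging the roles of $x$ and $y$ yields the same inequality with $\delta_D(y)$, and since $a\mapsto 1/[a(a+|x-y|)]$ is decreasing in $a>0$, both bounds combine into
\[
c_D(x,y)\ge\frac{|x-y|}{d(d+|x-y|)},\qquad d:=\delta_D(x)\wedge\delta_D(y).
\]
Under the implicit hypothesis $t\delta_D(x)<1$ (inherited from Theorem~\ref{c+euclidean}), $y\in B_c(x,t)$ together with $d\le\delta_D(x)$ gives $c_D(x,y)\,d<t\delta_D(x)<1$. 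Solving the previous inequality for $|x-y|/d$ and using $\log(1+u)\le u$ produces
\[
j_D(x,y)\le\frac{|x-y|}{d}\le\frac{c_D(x,y)\,d}{1-c_D(x,y)\,d}.
\]
Monotonicity of $s\mapsto s/(1-s)$ on $[0,1)$ then bounds the right-hand side by $t\delta_D(x)/(1-t\delta_D(x))=R$, completing this inclusion.

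The asymptotic $R/r\to 1$ as $t\to 0$ is immediate from the Taylor expansions $r=t\delta_D(x)+O(t^2)$ and $R=t\delta_D(x)+O(t^2)$, so no separate argument is needed. The only delicate point throughout is the symmetrization step that replaces $\delta_D(x)$ by $d=\delta_D(x)\wedge\delta_D(y)$ inside the lower bound for $c_D$; without it the bookkeeping leads to the weaker logarithmic $R$ mentioned above.
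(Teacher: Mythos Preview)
Your proof is correct and tracks the paper's argument closely. The first inclusion is handled identically---chain Lemma~\ref{jb-inclusion} with Theorem~\ref{c+euclidean}. For the second inclusion, the paper quotes \cite[Theorem~3.4]{IMSZ}, namely $j_D(x,y)\le(|x-y|+\delta_D(x)\wedge\delta_D(y))\,c_D(x,y)$, weakens $\delta_D(x)\wedge\delta_D(y)$ to $\delta_D(x)$, and then feeds in the Euclidean bound $|x-y|<t\delta_D(x)^2/(1-t\delta_D(x))$ from Theorem~\ref{c+euclidean}. Your symmetrization step rederives exactly that cited inequality: from $c_D(x,y)\ge|x-y|/[d(d+|x-y|)]$ one gets $(|x-y|+d)\,c_D(x,y)\ge|x-y|/d\ge j_D(x,y)$. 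So the two arguments rest on the same estimate; the difference is that you prove it from scratch (making the proof self-contained within the paper) and then organize the final bookkeeping via the monotone map $s\mapsto s/(1-s)$ rather than by first bounding $|x-y|$ separately. Both routes lead to the same $R$ under the same implicit hypothesis $t\delta_D(x)<1$.
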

\begin{proof}
We first prove the second inclusion. By \cite[Theorem~3.4]{IMSZ} we have 
$$j_D(x,y)\le (|x-y|+\delta_D(x)\wedge \delta_D(y))c_D(x,y)\le (|x-y|+\delta_D(x))c_D(x,y)
$$
and from Theorem~\ref{c+euclidean},
$$c_D(x,y)<t \implies |x-y|<t(\delta_D(x))^2/(1-t\delta_D(x)).
$$ 
Now for $y\in B_c(x,r)$,
using the above estimates we have, $j_D(x,y)<t\delta_D(x)/(1-t\delta_D(x))$.  
For the proof of the first inclusion we use Lemma~\ref{jb-inclusion} together with Theorem~\ref{c+euclidean} to conclude that 
$$j_D(x,y)<\log(1+(r\delta_D(x))/(1+r\delta_D(x)))\implies c_D(x,y)<r.
$$
By l'H$\hat{\mbox{o}}$spital rule it follows that $R/r\to 1$ as $t\to 0$.
\end{proof}


The radii obtained in Theorem~\ref{c-j inclusion} can be improved in the special case if we choose the domain $D=\Rn\setminus\{a\},a\in \Rn$. In this connection we prove

\begin{theorem}\label{c+euclidean+punctured}
For $a\in \Rn$, let $D=\Rn\setminus\{a\}$, $x\in D$ and $t>0$. Then the following holds:
$$B_j(x,r)\subset B_c(x,t)\subset B_j(x,R),
$$
where $r=\ds\log (1+t|x-a|)$ 
and $R=\log\left(\ds\frac{1}{1-t|x-a|}\right)$. The radii $r$ and $R$ are best possible. Moreover, $R/r\to 1$ as $t\to 0$.
\end{theorem}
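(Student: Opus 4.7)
The plan is to exploit the fact that $\partial D=\{a\}$: the supremum defining the Cassinian metric collapses to a single term, so that with $\delta_D(z)=|z-a|$ we have the closed forms
\[
c_D(x,y)=\frac{|x-y|}{|x-a|\,|y-a|}, \qquad j_D(x,y)=\log\!\left(1+\frac{|x-y|}{\min(|x-a|,|y-a|)}\right).
\]
Writing $d_x=|x-a|$ and $d_y=|y-a|$ for brevity, both inclusions reduce to elementary inequalities; the only subtlety is the $\min$ in $j_D$, which forces a case split on whether $d_y\ge d_x$ or $d_y<d_x$.

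For the right-hand inclusion $B_c(x,t)\subset B_j(x,R)$, I would assume $c_D(x,y)<t$, equivalently $|x-y|<t\,d_x d_y$. When $d_y\ge d_x$, the triangle inequality $d_y\le d_x+|x-y|$ yields $|x-y|<t\,d_x(d_x+|x-y|)$; solving (under the natural hypothesis $td_x<1$, without which $R$ is undefined and $B_c(x,t)$ is unbounded) gives $|x-y|/d_x<td_x/(1-td_x)$, hence $j_D(x,y)<\log(1/(1-td_x))=R$. When $d_y<d_x$, the stronger estimate $|x-y|/d_y<td_x$ is immediate, so $j_D(x,y)<\log(1+td_x)=r<R$.

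For the left-hand inclusion $B_j(x,r)\subset B_c(x,t)$, I would start from $|x-y|/\min(d_x,d_y)<td_x$. If $d_y\ge d_x$, then $|x-y|/(d_x d_y)\le |x-y|/d_x^{2}<t$; if $d_y<d_x$, then $|x-y|<t\,d_x d_y$ directly. Sharpness of both constants I would verify in the spirit of Theorem~\ref{c+euclidean} by placing $y$ on the Euclidean line $\ell$ through $a$ and $x$: taking $y_1\in\ell$ between $a$ and $x$ with $c_D(x,y_1)=t$ yields by elementary algebra $|x-y_1|=td_x^2/(1+td_x)$ and $|y_1-a|=d_x/(1+td_x)$, whence $j_D(x,y_1)=\log(1+td_x)=r$; taking $y_2\in\ell$ on the opposite ray with $c_D(x,y_2)=t$ yields $|x-y_2|=td_x^2/(1-td_x)$ and $j_D(x,y_2)=\log(1/(1-td_x))=R$. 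The asymptotic $R/r\to 1$ as $t\to 0$ then follows from $R=-\log(1-td_x)=td_x+O(t^2)$ and $r=\log(1+td_x)=td_x+O(t^2)$.

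The main thing to keep straight is that the two sharp constants are \emph{dual} in the case split: $r$ is realized by a point with $d_y<d_x$ (the ``inner'' branch of $\min$), while $R$ is realized by a point with $d_y>d_x$ (the ``outer'' branch). This is why the same line $\ell$ produces both extremal points, placed on opposite sides of $x$ from $a$.
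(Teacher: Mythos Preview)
Your proof is correct and follows essentially the same route as the paper: a case split on which of $|x-a|$, $|y-a|$ realizes the minimum in $j_D$, followed by the same extremal configuration on the line through $a$ and $x$ for sharpness, and the same asymptotic for $R/r$. Your argument is in fact slightly more direct than the paper's, since you work from the closed form $c_D(x,y)=|x-y|/(d_xd_y)$ throughout (and correctly flag the implicit hypothesis $t\,d_x<1$), whereas the paper routes the right-hand inclusion through Theorem~\ref{c+euclidean} to obtain the Euclidean bound $|x-y|<td_x^2/(1-td_x)$ before substituting into $j_D$.
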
 

\begin{proof}
Suppose that $y\in B_j(x,r)$. Then $j_D(x,y)<r$. On simplification, we get 
\begin{equation}\label{eqn5.7.1}
|x-y|<t|x-a|(|x-a|\wedge |y-a|)
\end{equation}
If $|x-a|\wedge |y-a|=|x-a|$, then
$$c_D(x,y)=\frac{|x-y|}{|x-a||y-a|}<\frac{t|x-a|}{|y-a|}\le t
$$
where the first inequality follows from (\ref{eqn5.7.1}) and the last inequality follows
from the fact that $|x-a|\le |y-a|$. Otherwise, 
$$c_D(x,y)=\frac{|x-y|}{|x-a||y-a|}<\frac{t|x-a||y-a|}{|x-a||y-a|}=t
$$
where the inequality follows from (\ref{eqn5.7.1}) and the first inclusion follows.
Now suppose that $c_D(x,y)<t$. This implies, By Theorem~\ref{c+euclidean}, $|x-y|<t|x-a|^2/(1-t|x-a|)$. If $|x-a|\wedge |y-a|=|x-a|$, then
$$j_D(x,y)<\log\left(1+\ds\frac{t|x-a|}{1-t|x-a|}\right)=\log\left(\ds\frac{1}{1-t|x-a|}
\right).
$$
Otherwise
$$j_D(x,y)<\log\left(1+\ds\frac{t|x-a|^2}{|y-a|(1-t|x-a|)}\right)\le \log\left(
\frac{1-t|x-a|}{1-2t|x-a|}\right)
$$
where the second inequality follows from the fact that
$$|y-a|\ge |x-a|-|x-y|\ge \frac{|x-a|(1-2r|x-a|)}{1-r|x-a|}.
$$
Now,
$$R=\min\left \{ \log\left(\ds\frac{1}{1-t|x-a|}\right), \log\left(
\frac{1-t|x-a|}{1-2t|x-a|}\right)\right \}=\log\left(\ds\frac{1}{1-t|x-a|}\right)
$$
and hence the proof of the second inclusion follows. 
By l'H$\hat{\mbox{o}}$spital rule it follows that
$$\lim_{t\to 0} \frac{R}{r}=\lim_{t\to 0} \frac{1+t|x-a|}{1-t|x-a|}=1.
$$

To show that radii $r$ and $R$ are best possible, we consider the same construction as did
in the proof of Theorem~\ref{c+euclidean}. For the same choice of $y_1$ and $y_2$, it is
easy to verify that
%
\[
  j_D(x,y_1) = \log \left( 1+\frac{|x-y_1|}{|x-a|-|x-y_1|} \right) =  \log \left( 1+t|x-a| \right),
\]
which shows that $r$ is best possible. Similarly, it can be verified that
\[
  j_D(x,y_2) = \log \left( 1+\frac{|x-y_2|}{|x-a|+|x-y_2|} \right) = \log \left( \frac{1}{1-t|x-a|} \right),
\]
which shows that $R$ is best possible.
\end{proof}

Under the light of Theorem~\ref{c+euclidean+punctured}, for an arbitrary proper subdomain 
$D$ of $\Rn$, we conjecture that

\begin{conjecture}
Let $D\subsetneq \Rn$, $x\in D$ and $t>0$. Then the following holds:
$$B_j(x,r)\subset B_c(x,t)\subset B_j(x,R),
$$
where $r=\ds\log (1+t\delta_D(x))$ 
and $R=\log\left(\ds\frac{1}{1-t\delta_D(x)}\right)$. 
Moreover, the radii $R$ and $r$ are best possible and $R/r\to 1$ as $t\to 0$.
\end{conjecture}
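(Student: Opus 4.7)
\medskip
\noindent\emph{Proof plan.} The idea is to imitate the proof of Theorem~\ref{c+euclidean+punctured}, with the unique boundary point $a$ there replaced by a closest boundary point to $x$ (or to $y$), whose existence follows from $\delta_D$ being attained on the closed set $\partial D\cap\Rn$. Two universal estimates drive the argument. First, $|x-p|\ge\delta_D(x)$ and $|y-p|\ge\delta_D(y)$ for every $p\in\partial D$, so taking the supremum in the definition of $c_D$ gives the uniform upper bound
\[
c_D(x,y)\le \frac{|x-y|}{\delta_D(x)\,\delta_D(y)}.
\]
Second, Theorem~\ref{c+euclidean} together with the symmetry $c_D(x,y)=c_D(y,x)$ yields, whenever $c_D(x,y)<t$ and $t\delta_D(x)<1$, both
\[
|x-y|<\frac{t\,\delta_D(x)^2}{1-t\delta_D(x)}\qquad\text{and}\qquad |x-y|<\frac{t\,\delta_D(y)^2}{1-t\delta_D(y)}.
\]

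For the first inclusion, rearranging $j_D(x,y)<r=\log(1+t\delta_D(x))$ gives $|x-y|<t\delta_D(x)\bigl(\delta_D(x)\wedge\delta_D(y)\bigr)$, and substituting into the uniform upper bound immediately forces $c_D(x,y)<t$. For the second inclusion, set $m=\delta_D(x)\wedge\delta_D(y)$ and select the version of the second pair of estimates whose numerator involves $m$ (the $\delta_D(x)$-form when $m=\delta_D(x)$, the symmetric $\delta_D(y)$-form otherwise); either choice delivers $|x-y|/m<tm/(1-tm)$, so
\[
j_D(x,y)=\log\bigl(1+|x-y|/m\bigr)<\log\bigl(1/(1-tm)\bigr)\le \log\bigl(1/(1-t\delta_D(x))\bigr)=R,
\]
where the last inequality uses $m\le\delta_D(x)$ together with monotonicity of $s\mapsto\log(1/(1-ts))$ on $[0,1/t)$.

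Sharpness of both $r$ and $R$ is then inherited from Theorem~\ref{c+euclidean+punctured}: taking $D=\Rn\setminus\{a\}$ and $x\in D$ gives $\delta_D(x)=|x-a|$, so the conjectured radii reduce exactly to the provably optimal ones proved sharp there. Consequently, no universal formula in $t$ and $\delta_D(x)$ alone can improve either bound. The asymptotic $R/r\to 1$ as $t\to 0$ then follows from a one-line L'H\^opital calculation, identical to the last line in the proof of Theorem~\ref{c+euclidean+punctured}.

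The only real subtlety is the case $\delta_D(y)<\delta_D(x)$ in the second inclusion. There, the stated $\delta_D(x)$-form of Theorem~\ref{c+euclidean} by itself gives only $|x-y|/\delta_D(y)<t\delta_D(x)^2/\bigl((1-t\delta_D(x))\delta_D(y)\bigr)$, which does not collapse to the desired shape. Swapping to the symmetric $\delta_D(y)$-form, and only afterwards invoking monotonicity of $s\mapsto ts/(1-ts)$ to pass from $\delta_D(y)$ up to $\delta_D(x)$ in the final bound, is what preserves the full logarithmic $R$. This is precisely where the present approach improves on Theorem~\ref{c-j inclusion}, whose proof routes through the linearized $j$-$c$ estimate of \cite[Theorem~3.4]{IMSZ} and thereby loses the tightness of the logarithm.
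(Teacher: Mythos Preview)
The paper states this result as an open \emph{conjecture} and gives no proof, so there is nothing to compare against. Your argument, however, appears to be a complete and correct proof of the conjecture.

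For the first inclusion, your bound $c_D(x,y)\le |x-y|/(\delta_D(x)\delta_D(y))$ (valid since $|x-p|\ge\delta_D(x)$ and $|y-p|\ge\delta_D(y)$ for every finite $p\in\partial D$) combined with $|x-y|<t\delta_D(x)\,(\delta_D(x)\wedge\delta_D(y))$ does yield $c_D(x,y)<t$ in both cases of the minimum, exactly as you describe. For the second inclusion, the decisive step is your use of Theorem~\ref{c+euclidean} \emph{symmetrically in~$y$}: when $\delta_D(y)<\delta_D(x)$ one has $t\delta_D(y)<t\delta_D(x)<1$, so the $y$-centered form $|x-y|<t\delta_D(y)^2/(1-t\delta_D(y))$ is legitimate, and then monotonicity of $s\mapsto\log(1/(1-ts))$ lifts the bound from $\delta_D(y)$ to $\delta_D(x)$. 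This is precisely the insight missing from the paper's Theorem~\ref{c-j inclusion}, whose proof routes through the coarser linear estimate $j_D\le(|x-y|+\delta_D(x)\wedge\delta_D(y))\,c_D$ of \cite[Theorem~3.4]{IMSZ} and therefore loses the sharp logarithm. Sharpness and $R/r\to1$ follow from Theorem~\ref{c+euclidean+punctured} as you say, since for $D=\Rn\setminus\{a\}$ one has $\delta_D(x)=|x-a|$.

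In short: your proposal is not merely a reproduction of the paper's approach but an actual resolution of the stated conjecture, obtained by combining the elementary bound $c_D\le|x-y|/(\delta_D(x)\delta_D(y))$ with a two-sided (in $x$ and in $y$) application of Theorem~\ref{c+euclidean}.
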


Next, we discuss the inclusion relation (\ref{inclusion}) when $d$
is the hyperbolic metric of the unit ball $\Bn$.
In the unit ball $\Bn$, the $j_{\Bn}$ metric and the $\rho_{\Bn}$ metric are 
comparable and is given by the relation 
\begin{equation}\label{rhojbn}
j_{\Bn}(x,y)\le \rho_{\Bn}(x,y)\le 2 j_{\Bn}(x,y)
\end{equation}
for $x,y\in \Bn$; see \cite[Theorem~7.56]{AVV97}.
The second inequality reduces to equality when $y=-x$.
It immediately follows that for $x\in \Bn$ and $r>0$, 
$$B_{\rho}(x,r)\subset B_{j}(x,r)\subset B_{\rho}(x,2r).
$$
This leads to the following:
\begin{theorem}\label{c+rho}
Let $x\in \Bn$ and $t>0$. Then the following inclusion relation holds:
$$B_{\rho}(x,r)\subset B_c(x,t)\subset B_{\rho}(x,R)
$$
where $r=\log\left(1+\ds\frac{t(1-|x|)}{1+t(1-|x|)}\right)$ and $R=\ds\frac{2t(1-|x|)}{1-t(1-|x|)}$. Moreover, $R/r\to 2$ as $t\to 0$.
\end{theorem}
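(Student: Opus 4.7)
The plan is to combine Theorem~\ref{c-j inclusion}, specialized to the domain $D=\Bn$, with the standard two-sided comparison $j_{\Bn}\le\rho_{\Bn}\le 2j_{\Bn}$ recorded in \eqref{rhojbn}. No new geometry is needed; the proof is a purely formal sandwich argument.

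First, I would note that $\delta_{\Bn}(x)=1-|x|$, so Theorem~\ref{c-j inclusion} specializes to
$$B_j(x,r_0)\subset B_c(x,t)\subset B_j(x,R_0),$$
with $r_0=\log\left(1+\frac{t(1-|x|)}{1+t(1-|x|)}\right)$ and $R_0=\frac{t(1-|x|)}{1-t(1-|x|)}$. Next, I would translate between $j$-balls and $\rho$-balls using \eqref{rhojbn}: the inequality $j_{\Bn}\le\rho_{\Bn}$ yields $B_{\rho}(x,s)\subset B_j(x,s)$, while $\rho_{\Bn}\le 2 j_{\Bn}$ yields $B_j(x,s)\subset B_{\rho}(x,2s)$ for every $s>0$.

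Combining these inclusions gives the chain
$$B_{\rho}(x,r_0)\subset B_j(x,r_0)\subset B_c(x,t)\subset B_j(x,R_0)\subset B_{\rho}(x,2R_0),$$
and setting $r:=r_0$ and $R:=2R_0$ produces exactly the formulas in the statement. For the asymptotic claim, I would compute $R/r=2R_0/r_0$ and apply l'H\^opital's rule (or expand $\log(1+u)=u+O(u^2)$): as $t\to 0$ both $r_0$ and $R_0$ behave like $t(1-|x|)$, so $R/r\to 2$.

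The proof really has no hard step; the only point that deserves a brief sanity check is the direction of the inclusions between $j$- and $\rho$-balls (which way the inequalities flow when passing to sublevel sets), and the observation that the factor $2$ in the upper half of \eqref{rhojbn} is exactly what produces the limit $2$ (rather than $1$) in $R/r$. Since the outer $\rho$-radius $R$ inherits this factor but the inner $\rho$-radius $r$ does not, the ratio does not tend to $1$ as in Theorems~\ref{c+euclidean} and \ref{c-j inclusion}. This already signals that the radii here are presumably not sharp, consistent with the absence of a sharpness claim in the statement.
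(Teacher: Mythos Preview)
Your proof is correct and follows essentially the same approach as the paper: both combine Theorem~\ref{c-j inclusion} (specialized to $D=\Bn$, so $\delta_D(x)=1-|x|$) with the comparison \eqref{rhojbn} to sandwich the Cassinian ball between hyperbolic balls, and both use l'H\^opital (or an equivalent first-order expansion) for the limit $R/r\to 2$. Your presentation is in fact slightly cleaner, since you read off $r=r_0$ directly from the chain of inclusions, whereas the paper inverts the relation $r_0=\log\bigl(1+\tfrac{t(1-|x|)}{1+t(1-|x|)}\bigr)$ to express $t$ in terms of $r$ before simplifying back.
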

\begin{proof}
By Theorem~\ref{c-j inclusion}, $B_c(x,t)\subset B_j(t(1-|x|)/(1-t(1-|x|)))$ and by 
(\ref{rhojbn}), the second inclusion follows with $R=2t(1-|x|))/(1-t(1-|x|))$. Again from
(\ref{rhojbn}) and Theorem~\ref{c-j inclusion}, we have
$$B_{\rho}(x, r)\subset B_{j}(x,r)\subset B_{c}(x,(e^r-1)/(1-|x|)(2-e^r)).
$$
On simplifying, we get $B_{\rho}(x,r)\subset B_{c}(x,t)$ with $r=\log(1+(t(1-|x|)/(1+t(1-|x|))))$. By l'H$\hat{\mbox{o}}$spital rule it is easy to see that 
$$\lim_{t\to 0}\frac{R}{r}=2.
$$
This completes the proof of our theorem.
\end{proof}

Another sharp inclusion property between $j$-metric ball and 
hyperbolic metric ball in $\Bn$ is derived by Kl\'en and Vuorinen in \cite{KV13}. Indeed, they 
proved that

\begin{lemma}\cite[Theorem~3.1]{KV13}\label{3.1kv13}
Let $x\in \Bn$ and $r>0$. Then
$$B_j(x,m)\subset B_{\rho}(x,r)\subset B_j(x,M)
$$
where $m=\max\{m_1,m_2\}$ and $M=\log\left(1+(1+|x|)\ds\frac{e^r-1}{2}\right)$;
$$m_1=\log\left(1+(1+|x|)\sinh(r/2)\right) \quad m_2=\log\left(1+(1-|x|)\frac{e^r-1}{2}
\right). 
$$
Moreover, the inclusions are sharp and $M/m\to 1$ as $r\to 0$.
\end{lemma}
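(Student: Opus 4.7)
My plan is to exploit the rotational symmetry about the line through the origin and $x$, reducing to a two-dimensional analysis. Normalizing $x=|x|e_1$ with $t=|x|$, I would first recall that the hyperbolic ball $B_\rho(x,r)$ in $\mathbb{B}^n$ is a Euclidean ball $B^n(c,R_e)$ centered on the axis, whose axial intersection points $p_\pm=t_\pm e_1$ are determined by the one-dimensional hyperbolic distance identity $\frac{(1+t_+)(1-t)}{(1-t_+)(1+t)}=e^r$ (and the analogous one for $t_-$). From these closed-form expressions I can write $t_\pm$, $c$, and $R_e$ explicitly in terms of $t$ and $r$.

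For the outer inclusion $B_\rho(x,r)\subset B_j(x,M)$: I would show that $j(x,\cdot)$ attains its maximum on $\partial B_\rho(x,r)$ at the far axial point $p_+$. Since $|p_+|>t$, we have $\delta(x)\wedge\delta(p_+)=1-t_+$, and a direct calculation using the axial identity yields
\[
\frac{|x-p_+|}{1-t_+}=\frac{(1+t)(e^r-1)}{2},
\]
so $j(x,p_+)=M$. The harder piece is showing $p_+$ really is the maximum: I would split $\partial B_\rho(x,r)$ into the sub-hemispheres $\{|y|\le t\}$ and $\{|y|\ge t\}$ so that in each region $j$ has a single closed form ($\delta(x)\wedge\delta(y)$ equals either $1-t$ or $1-|y|$), and then optimize by showing the relevant quotient is monotonic along the Euclidean sphere in the axial direction.

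For the inner inclusion $B_j(x,m)\subset B_\rho(x,r)$: rather than locating a single minimum, I would prove two independent sufficient conditions $B_j(x,m_i)\subset B_\rho(x,r)$, so that $m=\max\{m_1,m_2\}$ is automatically admissible. For $m_1$, I work in the regime $|y|\le t$: the identity $|x-y|=(1-t)(e^{j(x,y)}-1)$ combined with the hyperbolic-distance formula $\sinh(\rho/2)=|x-y|/\sqrt{(1-|x|^2)(1-|y|^2)}$ and the bound $1-|y|^2\ge 1-t^2$ gives $\sinh(\rho/2)\le (e^j-1)/(1+t)$, and requiring the right-hand side to be $\le\sinh(r/2)$ produces exactly $m_1=\log(1+(1+t)\sinh(r/2))$. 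For $m_2$, a complementary estimate handles the regime $|y|\ge t$ using $|x-y|=(1-|y|)(e^j-1)$ together with a control on $1-|y|$ in terms of $j$. Sharpness in each case is witnessed by an equatorial point where $|y|=t$ (for $m_1$) and an appropriate axial/near-boundary point (for $m_2$).

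The main obstacle is the maximization step in the outer inclusion: because $j$ is piecewise in $\delta(x)\wedge\delta(y)$ and $\partial B_\rho(x,r)$ is a Euclidean sphere that is \emph{not} centered at $x$, one must genuinely split and compare the two regimes before concluding that $p_+$ dominates. Once the extrema are pinned down, the asymptotic $M/m\to 1$ follows from the expansions $M\sim(1+t)r/2$ and $m_1\sim(1+t)r/2$ as $r\to 0$ (with $m_1$ dominating $m_2$ in this limit, so $m\sim(1+t)r/2$ too), completing the proof.
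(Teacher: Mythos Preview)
The paper does not give a proof of this lemma; it is quoted verbatim from \cite[Theorem~3.1]{KV13} and used only as a black box in the subsequent theorem. There is therefore nothing in the present paper to compare your argument against.

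On the merits of your sketch: the outer-inclusion strategy and the computation $j(x,p_+)=M$ are correct. For the inner inclusion your $m_1$-estimate is also right, and in fact the bound $\sinh(\rho/2)\le (e^{j}-1)/(1+t)$ holds for \emph{all} $y\in\Bn$, not just in the regime $|y|\le t$ that you single out: when $|y|>t$ one has $|x-y|=(1-|y|)(e^{j}-1)$, and then
\[
\sinh\frac{\rho}{2}=\frac{(e^{j}-1)}{\sqrt{1-t^2}}\sqrt{\frac{1-|y|}{1+|y|}}<\frac{e^{j}-1}{1+t}
\]
by monotonicity of $(1-s)/(1+s)$. You must make this explicit, because the global inclusion $B_j(x,m_1)\subset B_\rho(x,r)$ that you claim requires both regimes, and as written your argument only covers one. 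The $m_2$ part, by contrast, is left at the level of ``a complementary estimate \dots\ together with a control on $1-|y|$ in terms of $j$,'' which is not a proof; and your sharpness witness for $m_2$ (``an appropriate axial/near-boundary point'') is not identified. A direct check shows that $j(x,p_-)=m_2$ only when $e^{r}=\bigl((1+t)/(1-t)\bigr)^2$, so $p_-$ is not the right point in general, and this piece needs real work before the argument is complete.
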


Using Lemma~\ref{3.1kv13} together with Theorem~\ref{c-j inclusion} we obtain
\begin{theorem}\label{crhobn}
Let $x\in \Bn$ and $t>0$. Then the following inclusion relation holds:
$$B_{\rho}(x,r)\subset B_c(x,t)\subset B_{\rho}(x,R)
$$
where $r=\log\left(1+\ds\frac{2t(1-|x|)}{(1+|x|)(1+t(1-|x|))}\right)$ and $R=\min\{R_1,R_2\}$
with
$$R_1=2\arsh\left(\frac{\exp\left(\frac{t(1-|x|)}{1-t(1-|x|)}-1\right)}{1+|x|}\right)\quad
~~\mbox{ and }~~
R_2=\log\left(1+\frac{2\exp\left(\frac{t(1-|x|)}{1-t(1-|x|)}-1\right)}{1-|x|}\right).
$$
\end{theorem}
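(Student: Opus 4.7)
The plan is to chain the two preceding results, namely Theorem~\ref{c-j inclusion} (which compares Cassinian and $j$-metric balls in any proper subdomain) and Lemma~\ref{3.1kv13} (which compares $j$-metric and hyperbolic balls in $\mathbb{B}^n$), and then to invert the inequalities that arise. Throughout, I will abbreviate $\delta = 1-|x| = \delta_{\mathbb{B}^n}(x)$ and set $j_0 = t\delta/(1 - t\delta)$, so that by Theorem~\ref{c-j inclusion} the Cassinian ball $B_c(x,t)$ is contained in the $j$-ball $B_j(x, j_0)$.

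For the outer inclusion $B_c(x,t)\subset B_\rho(x,R)$, the strategy is to enclose the $j$-ball $B_j(x, j_0)$ inside a hyperbolic ball using the left side of Lemma~\ref{3.1kv13}. That lemma asserts $B_j(x,m) \subset B_\rho(x, R)$ with $m = \max\{m_1, m_2\}$, so it suffices to determine the smallest $R$ for which either $m_1(R) \geq j_0$ or $m_2(R) \geq j_0$. Solving $\log(1+(1+|x|)\sinh(R/2)) \geq j_0$ for $R$ yields the candidate $R_1 = 2\arsh\bigl((e^{j_0}-1)/(1+|x|)\bigr)$, while solving $\log(1 + (1-|x|)(e^R-1)/2) \geq j_0$ yields $R_2 = \log\bigl(1 + 2(e^{j_0}-1)/(1-|x|)\bigr)$. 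Taking $R = \min\{R_1, R_2\}$ is enough to force $m(R) \ge j_0$, hence $B_c(x,t) \subset B_j(x, j_0) \subset B_\rho(x, R)$.

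For the inner inclusion $B_\rho(x,r) \subset B_c(x,t)$, I reverse the roles. From Lemma~\ref{3.1kv13}, $B_\rho(x,r) \subset B_j(x, M)$ with $M = \log(1 + (1+|x|)(e^r-1)/2)$; from Theorem~\ref{c-j inclusion}, $B_j(x, r_0) \subset B_c(x,t)$ with $r_0 = \log\bigl(1 + t\delta/(1+t\delta)\bigr)$. Requiring $M \leq r_0$ and solving for $r$ gives $e^r - 1 \leq 2t\delta/((1+|x|)(1+t\delta))$, i.e. exactly the $r$ stated in the theorem. Composing the two inclusions completes the proof.

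The argument is essentially bookkeeping: no new geometric insight is required beyond the already-established lemmas. The only slightly delicate point is making sure that, for the outer inclusion, I combine the two options from Lemma~\ref{3.1kv13} correctly — since $m = \max\{m_1, m_2\}$, either $m_i \geq j_0$ suffices, which is why the optimal $R$ is the minimum of $R_1$ and $R_2$ rather than the maximum. I would also remark briefly on the behaviour $R/r$ as $t \to 0$, which follows by l'H\^{o}pital as in the preceding theorems, though the limit is no longer $1$ because of the factor-of-two gap between $j_{\mathbb{B}^n}$ and $\rho_{\mathbb{B}^n}$ implicit in Lemma~\ref{3.1kv13}.
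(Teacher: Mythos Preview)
Your proof is correct and follows exactly the route the paper takes: the paper's own proof is nothing more than the sentence ``Using Lemma~\ref{3.1kv13} together with Theorem~\ref{c-j inclusion} we obtain\ldots'', and your chaining of the two inclusions (with the correct observation that $m=\max\{m_1,m_2\}$ forces $R=\min\{R_1,R_2\}$) is precisely that argument made explicit. One minor correction to your closing aside: the limit $R_1/r$ as $t\to 0$ is in fact $1$, not something larger --- the factor-of-two gap you mention does not appear here because Lemma~\ref{3.1kv13} is sharper than the crude sandwich $j\le\rho\le 2j$; the paper's subsequent remark records exactly this.
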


\begin{remark}
If $R=R_1$, then Theorem~\ref{crhobn} is sharper than Theorem~\ref{c+rho} 
(since $R_1/r\to 1$ as $t\to 0$). Otherwise Theorem~\ref{c+rho} is sharper than Theorem~\ref{crhobn}. 
\end{remark}

However, we conjecture a better estimate for radii $r$ and $R$ in Theorem~\ref{c+rho}.

\begin{conjecture}
Let $x\in \Bn$ and $t>0$. Then the following inclusion relation holds:
$$ B_\rho(x,r) \subset B_c(x,t)\subset B_\rho(x,R)
$$ 
where $r=t(1-|x|)/\sqrt{(1+|x|)(1+|x|-2t(1-|x|))}$ 
and \newline $R=t(1-|x|)/\sqrt{(1+|x|)(1+|x|+2t(1-|x|)}$. Moreover, the radii $r$ and $R$ are 
sharp and $R/r\to 1$ as $t\to 0$.
\end{conjecture}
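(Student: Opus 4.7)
The natural strategy parallels the proofs of Theorems~\ref{c+euclidean} and \ref{c+euclidean+punctured}: identify the hyperbolically nearest and farthest points on $\partial B_{c}(x,t)$ from $x$, express those hyperbolic distances in closed form, and match with the conjectured $r$ and $R$. Both $\rho_{\Bn}$ and $c_{\Bn}$ are invariant under the orthogonal rotations of $\Bn$ that fix the line $\ell$ through $0$ and $x$, so the level surface $\partial B_{c}(x,t)$ is rotationally symmetric about $\ell$. After normalizing $x=|x|\,e_1$, one expects the extrema of $y\mapsto\rho_{\Bn}(x,y)$ on $\partial B_{c}(x,t)$ to be attained at the two points where the surface meets $\ell$.

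Along the axis, write $y=u\,e_1$ with $u\in(-1,1)$. A direct calculation using $|p-z|^2=1-2|z|\cos\theta+|z|^2$ for $p=\cos\theta\,e_1+\sin\theta\,w$ with $w\perp e_1$, $|w|=1$, shows that the infimum of $|x-p|\,|p-y|$ over $p\in\partial\Bn$ is attained at $p=e_1$ when $u>-|x|$ and at $p=-e_1$ when $u<-|x|$; in particular an interior critical point in $\theta$, when present, is a local maximum. This yields two explicit branches of $c_{\Bn}(x,u\,e_1)$, and setting $c_{\Bn}(x,u\,e_1)=t$ in each branch gives the two extremal abscissae $u_{+}$ and $u_{-}$. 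The hyperbolic distance along the radius is
\[
\rho_{\Bn}(|x|\,e_1,\,u\,e_1)=\left|\log\frac{(1+u)(1-|x|)}{(1-u)(1+|x|)}\right|,
\]
and substituting $u_{\pm}$ should, after algebraic simplification, yield the formulas for $r$ and $R$. The square-root form of the conjectured radii suggests that the cleanest derivation does not go through the logarithm but rather through the half-angle identity
\[
\sinh\!\bigl(\rho_{\Bn}(x,y)/2\bigr)=\frac{|x-y|}{\sqrt{(1-|x|^2)(1-|y|^2)}},
\]
which, when combined with the axial expressions for $|x-y|$ and $|y|$ obtained from the two branches above, produces a clean square-root expression. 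The limit $R/r\to 1$ as $t\to 0$ then follows by Taylor expansion of each formula about $t=0$.

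The principal obstacle is justifying the reduction to the axis. Axial symmetry of the two metrics forces critical points of $\rho_{\Bn}(x,\cdot)|_{\partial B_{c}(x,t)}$ to occur on $\ell$, but does not by itself rule out additional off-axis critical points, and in the absence of such an argument one only obtains candidate radii, not sharp ones. The difficulty is aggravated by the fact that $\partial B_{c}(x,t)$ has no elementary defining equation away from $\ell$: the boundary point $p\in\partial\Bn$ that realizes the supremum in the definition of $c_{\Bn}(x,y)$ depends on $y$ in a non-closed-form way. A promising route is to apply the M\"obius involution $T_{x}$ of $\Bn$ with $T_{x}(x)=0$, which converts every level set of $\rho_{\Bn}(x,\cdot)$ into a Euclidean sphere centred at the origin. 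Tracking $\partial B_{c}(x,t)$ under $T_{x}$ by means of the conformal identity \eqref{mob2} used in Theorem~\ref{lip-bn}, one may then hope to reduce the axial extremality of $\rho_{\Bn}(x,\cdot)$ on $\partial B_{c}(x,t)$ to a monotonicity-in-polar-angle statement for the transformed Cassinian level set about the origin, where the analysis of \cite{IMSZ} for the centred case is available.
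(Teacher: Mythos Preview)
The statement you are attempting is a \emph{conjecture} in the paper; the authors offer no proof, so there is nothing to compare your argument against. What you have written is a strategy, not a proof, and you yourself flag the essential gap: the reduction to the axis $\ell$ is unjustified. Rotational symmetry about $\ell$ guarantees that the two axial intersection points of $\partial B_{c}(x,t)$ are critical for $\rho_{\Bn}(x,\cdot)$, but it does \emph{not} force all critical points onto $\ell$, and in particular it does not show that the global extrema are axial. Until that is established, your computation produces only candidate radii. Your proposed route via the M\"obius involution $T_{x}$ is plausible in spirit, but you would still need a monotonicity or convexity property of the transformed Cassinian level set that is not currently available; this is precisely why the statement remains a conjecture.

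Two further observations about what the axial calculation actually yields. First, carrying your half-angle identity through with $x=|x|e_{1}$ and the two axial solutions of $c_{\Bn}(x,ue_{1})=t$ (where the supremum is attained at $p=e_{1}$) gives
\[
\sinh\!\frac{\rho_{\Bn}(x,y_{\pm})}{2}=\frac{t(1-|x|)}{\sqrt{(1+|x|)\bigl(1+|x|\pm 2t(1-|x|)\bigr)}},
\]
so the conjectured expressions arise as values of $\sinh(\rho/2)$, not of $\rho$ itself; if the conjecture is to be taken literally with $r,R$ denoting hyperbolic radii, the right-hand sides should presumably be wrapped in $2\arsh(\cdot)$. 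Second, the larger of the two values (the one with the minus sign under the square root) comes from the axial point on the side of $x$ \emph{away} from the nearest boundary, so on the axis it is the $\rho$-farthest point; this means the labels $r$ and $R$ in the conjecture appear to be interchanged. Neither issue affects the limit $R/r\to 1$ as $t\to 0$, but you should be aware of them before investing further effort in the off-axis analysis.
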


In order to discuss the relation (\ref{inclusion}) when $d=k_D$, for a domain $D\subsetneq\Rn$, 
we recall the useful inequality \cite[Lemma~2.1]{GP76}
\begin{equation}\label{jkD}
k_D(x,y)\ge j_D(x,y); \quad x,y\in D\subsetneq \Rn.
\end{equation}
It follows immediately from (\ref{jkD}) and Theorem~\ref{c+euclidean+punctured} that
\begin{corollary}\label{newcor}
For $a\in \Rn$, let $D=\Rn\setminus\{a\}$, $x\in D$ and $t>0$. 
Then the following holds:
$$B_k(x,r)\subset B_c(x,t)
$$
where $r=\ds\log (1+t|x-a|)$.
\end{corollary}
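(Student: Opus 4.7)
The plan is to observe that this is an immediate consequence of chaining two results already available: the general comparison $k_D \geq j_D$ from inequality (\ref{jkD}), and the sharp inclusion $B_j(x,r) \subset B_c(x,t)$ from Theorem~\ref{c+euclidean+punctured} in the punctured-space setting $D = \R^n \setminus \{a\}$.

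First I would note that (\ref{jkD}) instantly yields the level-set inclusion $B_{k_D}(x,r) \subset B_{j_D}(x,r)$ for every $r > 0$, because if $k_D(x,y) < r$ then $j_D(x,y) \leq k_D(x,y) < r$. Next, I would specialize to $D = \R^n \setminus \{a\}$ and invoke Theorem~\ref{c+euclidean+punctured}, which supplies the inclusion $B_{j_D}(x,r) \subset B_{c_D}(x,t)$ precisely when $r = \log(1 + t|x-a|)$. Composing these two inclusions gives the desired result. The proof therefore reduces to one line of verification and citation.

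There is no substantive obstacle for this direction. The genuinely harder (and apparently open) question implicit here is the converse inclusion $B_{c_D}(x,t) \subset B_{k_D}(x,R)$: since the quasihyperbolic metric can be much larger than $j_D$ in domains with complicated boundary, obtaining a sharp radius $R$ in terms of $t$ and $|x-a|$ only would require exploiting the very simple geometry of $\R^n \setminus \{a\}$ (where quasihyperbolic geodesics are explicit log-spirals around $a$). This likely explains why the corollary is stated as a one-sided inclusion; the author's phrasing ``follows immediately'' signals that no bookkeeping beyond the two ingredients above is needed.
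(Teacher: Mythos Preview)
Your proof is correct and matches the paper's own reasoning exactly: the corollary is derived by chaining the inclusion $B_k(x,r)\subset B_j(x,r)$ from~(\ref{jkD}) with the first inclusion of Theorem~\ref{c+euclidean+punctured}. Your remarks about the difficulty of the reverse inclusion are also in line with the paper, which leaves that direction as a conjecture.
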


At present we do not have any proof for sharpness of the inclusion relation 
in Corollary~\ref{newcor}. Therefore, it is appropriate here to state the 
following conjecture:
\begin{conjecture}
For $a\in \Rn$, let $D=\Rn\setminus\{a\}$, let $x\in D$ and $t>0$.
Then the following inclusion relation holds:
$$B_{k}(x,r)\subset B_c(x,t)\subset B_k(x,R)
$$
where $r=\log(1+t|x-a|)$ and $R=\log\left(\ds\frac{1}{1-t|x-a|}\right)$. The radii $r$ and $R$
are sharp and $R/r\to 1$ as $t\to 0$.
\end{conjecture}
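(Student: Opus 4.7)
The conjecture adds three things to Corollary~\ref{newcor}: the right inclusion $B_c(x,t)\subset B_k(x,R)$, the sharpness of both $r$ and $R$, and the asymptotic $R/r\to 1$. I would derive all of them from the single pointwise two-sided bound
\[
\log\bigl(1 + c_D(x,y)|x-a|\bigr) \;\le\; k_D(x,y) \;\le\; \log\frac{1}{1 - c_D(x,y)|x-a|}
\]
(the right half valid when $c_D(x,y)|x-a|<1$). After translating so $a=0$ and rescaling by $1/|x|$ (under which $k_D$ is invariant while $c_D$ picks up a factor of $|x|$), everything reduces to the case $|x|=1$. Because both metrics depend only on $|x|,|y|$ and the angle at $0$, I would work in the $2$-plane through $\{0,x,y\}$, identify it with $\mathbb{C}$ taking $x=1$, and use $c_D(x,y)=|1-y|/|y|$ together with $k_D(x,y)=|\log y|$ for the principal branch.

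The left inequality is essentially Corollary~\ref{newcor} with sharpness tacked on: starting from $k_D\ge j_D$ in~\eqref{jkD}, a one-line case split on whether $|y|\le 1$ or $|y|\ge 1$ gives $j_D(x,y)\ge\log(1+c_D(x,y))$, with equality at the radial point $y=x/(1+t|x-a|)$, where $k_D=j_D$; this witnesses the sharpness of $r$.

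The right inequality is the new content and the main obstacle, since the naive route through $k_D\ge j_D$ goes in the wrong direction. The key idea is the inversion $z\mapsto z^* = a + (z-a)/|z-a|^2$ in the unit sphere about $a$: two short computations yield $c_D(x,y)=|x^*-y^*|$ (the Cassinian metric becomes a Euclidean distance) and $k_D(x^*,y^*)=k_D(x,y)$ (since $|z^*-a|=1/|z-a|$ and the angle at $a$ is preserved). In the normalized complex model this reads $|y^*-1|=c_D(x,y)$ and $|\log y^*|=|\log y|$, reducing the problem to bounding $|\log w|$ on the Euclidean disk $|w-1|\le c_D(x,y)<1$. For this I would use the integral representation
\[
\log w = \int_0^1\frac{w-1}{1+s(w-1)}\,ds
\]
(valid on $\mathbb{C}\setminus(-\infty,0]$) together with the elementary bound $|1+s(w-1)|\ge 1-s|w-1|$ to obtain $|\log w|\le -\log(1-|w-1|)$, and evaluate at $w=y^*$.

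For the sharpness of $R$ I would argue that equality in the triangle-inequality step above forces the integrand $(w-1)/(1+s(w-1))$ to have constant argument on $[0,1]$, which pins $w$ to the real interval $(0,1)$; inversion then identifies the extremal point as $y=x/(1-t|x-a|)$, so $R$ cannot be reduced. Finally, $R/r\to 1$ as $t\to 0$ is a direct l'H\^opital calculation on the explicit formulas for $r$ and $R$.
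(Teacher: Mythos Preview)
The paper does not prove this statement: it is posed as an open conjecture immediately after Corollary~\ref{newcor}, precisely because the authors lacked the upper inclusion and the sharpness. So there is no ``paper's own proof'' to compare against; what you have done is supply one.

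Your argument is correct. The decisive observation is that inversion $z\mapsto z^*$ in the unit sphere about $a$ is simultaneously a $k_D$-isometry of $D=\Rn\setminus\{a\}$ (because the density $|dz|/|z-a|$ is preserved) and converts the Cassinian distance into a Euclidean one via the standard identity $|x^*-y^*|=|x-y|/(|x-a||y-a|)=c_D(x,y)$. After normalizing to $a=0$, $|x|=1$, this reduces the upper inclusion to the elementary inequality $|\log w|\le -\log(1-|w-1|)$ on the disk $|w-1|<1$, which your integral estimate handles cleanly with equality exactly on the real segment $(0,1)$; the extremal point $y=x/(1-t|x-a|)$ then witnesses sharpness of $R$. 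The lower inclusion and the sharpness of $r$ follow, as you say, from $k_D\ge j_D$ and the radial witness $y=x/(1+t|x-a|)$, and $R/r\to 1$ is immediate.

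Two minor remarks. First, the identity $k_D(1,w)=|\log w|$ you invoke is the Martin--Osgood formula for the quasihyperbolic metric of the punctured space; it is worth citing, though strictly speaking you do not need it for the upper bound: the straight segment from $1$ to $w$ already has quasihyperbolic length $\int_0^1|w-1|/|1+s(w-1)|\,ds\le -\log(1-|w-1|)$, which gives $k_D(1,w)\le -\log(1-|w-1|)$ directly from the definition. Second, the upper inclusion (and the formula for $R$) tacitly requires $t|x-a|<1$; this should be made explicit in the hypothesis.
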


In proper subdomains of $\Rn$ the following inclusion relation holds in between the 
Cassinian metric ball and the quasihyperbolic ball. 
The following lemma is useful in this setting.
\begin{lemma}\cite[Proposition~2.2]{KV12}\label{jkInclusion_general}
Let $D\subsetneq \Rn$ be a domain and $r\in (0,\log 2)$. Then 
$$B_j(x,m)\subset B_k(x,r)\subset B_j(x,r)\subset B_k(x,M)
$$
where $r=\log(2-e^r)$ and $M=\log\left(\ds\frac{1}{2-e^r}\right)$. Moreover, the second
inclusion is sharp and $M/m\to 1$ as $r\to 0$.
\end{lemma}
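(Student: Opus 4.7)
The plan is to reduce everything to two ingredients: the pointwise inequality $k_D\ge j_D$ from (\ref{jkD}), and the standard segment upper bound on $k_D$. The middle inclusion $B_k(x,r)\subset B_j(x,r)$ is immediate from (\ref{jkD}). For sharpness of this inclusion I would take $D=\Rn\setminus\{0\}$ and choose $x,y$ on a common ray through the origin: along such a ray the quasihyperbolic geodesic is the straight segment and $\delta_D(z)=|z|$, so a direct integration gives $k_D(x,y)=j_D(x,y)$. Sliding $y$ along the ray so that $k_D(x,y)\uparrow r$ forces $j_D(x,y)\uparrow r$ as well, showing that no radius smaller than $r$ could replace the one on the right.

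For the two outer inclusions the key step is the single-sided bound: if $s\in(0,\log 2)$ and $j_D(x,y)<s$, then
\begin{equation*}
k_D(x,y)\le\log\frac{1}{2-e^s}.
\end{equation*}
To derive this, set $d_0=\delta_D(x)\wedge\delta_D(y)$ and note that $j_D(x,y)<s$ yields $|x-y|<(e^s-1)d_0$; since $e^s-1<1$, the segment $[x,y]$ lies in $B^n(x,\delta_D(x))\subset D$. For $z\in[x,y]$ the reverse triangle inequality for the distance-to-boundary function gives $\delta_D(z)\ge\delta_D(x)-|x-z|$, and integrating $|dz|/\delta_D(z)$ along $[x,y]$ yields $k_D(x,y)\le\log(\delta_D(x)/(\delta_D(x)-|x-y|))\le\log(1/(2-e^s))$.

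Apply this bound twice. With $s=r$ one obtains the rightmost inclusion $B_j(x,r)\subset B_k(x,M)$ with $M=\log(1/(2-e^r))$. With $s=m$, where $m$ is chosen so that $\log(1/(2-e^m))=r$, i.e.\ $m=\log(2-e^{-r})<\log 2$, one obtains the leftmost inclusion $B_j(x,m)\subset B_k(x,r)$. For the asymptotics, Taylor-expanding $e^{\pm r}$ around $r=0$ gives both $m\sim r$ and $M\sim r$, hence $M/m\to 1$.

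The only delicate point in this plan is the geometric input $[x,y]\subset D$, and this is precisely where the hypothesis $r<\log 2$ enters: it forces $|x-y|<\delta_D(x)$, so that one can integrate along the straight segment. Without that restriction the quasihyperbolic geodesic in an arbitrary domain need not be explicit, and a comparable upper bound on $k_D$ in terms of $j_D$ is not available by elementary means; that is the main obstacle if one wishes to extend the lemma beyond the small-radius regime.
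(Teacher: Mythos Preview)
The paper does not prove this lemma; it is quoted verbatim from \cite[Proposition~2.2]{KV12} and used as a black box. Your argument is correct and is essentially the standard proof: the middle inclusion is just (\ref{jkD}); the outer two come from the elementary segment estimate
\[
k_D(x,y)\le\int_0^{|x-y|}\frac{dt}{\delta_D(x)-t}=\log\frac{\delta_D(x)}{\delta_D(x)-|x-y|},
\]
valid once $|x-y|<\delta_D(x)$, which is exactly what $r<\log 2$ buys. Your derivation of $m=\log(2-e^{-r})$ also silently corrects the paper's typo ``$r=\log(2-e^r)$'' in the statement. The sharpness example in $\Rn\setminus\{0\}$ with collinear $x,y,0$ is the right one, since there $k_D(x,y)=j_D(x,y)=\log(|y|/|x|)$.
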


\begin{theorem}
Let $D\subsetneq \Rn$ be a domain, $x\in D$, and $t\in \left(0,\frac{\log 2}{\delta_D(x)(1+\log 2)}\right)$.
Then 
$$B_k(x,r)\subset B_c(x,t)\subset B_k(x,R)
$$
where $r=\log\left(1+\ds\frac{t\delta_D(x)}{1+t\delta_D(x)}\right)$ and 
$R=\log\left(\ds\frac{1}{2-\exp\big (\frac{t\delta_D(x)}{1-t\delta_D(x)}\big)}\right)$.
Moreover, $R/r\to 1$ as $t\to 0$.
\end{theorem}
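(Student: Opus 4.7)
The plan is to chain Theorem~\ref{c-j inclusion} with Lemma~\ref{jkInclusion_general} (together with the elementary inequality $j_D\le k_D$ recorded in \eqref{jkD}) to transport the Cassinian/distance-ratio comparison into a Cassinian/quasihyperbolic one. The hypothesis on $t$ should correspond exactly to the range in which Lemma~\ref{jkInclusion_general} can be invoked.

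\textbf{First inclusion.} Since $j_D\le k_D$ on $D\times D$, the trivial containment $B_k(x,r)\subset B_j(x,r)$ holds for every $r>0$. Combining this with the first inclusion in Theorem~\ref{c-j inclusion}, with $r=\log\bigl(1+\tfrac{t\delta_D(x)}{1+t\delta_D(x)}\bigr)$, gives
$$
B_k(x,r)\subset B_j(x,r)\subset B_c(x,t),
$$
which is precisely what is claimed.

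\textbf{Second inclusion.} Theorem~\ref{c-j inclusion} gives $B_c(x,t)\subset B_j(x,R_0)$ with $R_0=\frac{t\delta_D(x)}{1-t\delta_D(x)}$. A short algebraic check shows that the condition $t<\frac{\log 2}{\delta_D(x)(1+\log 2)}$ is equivalent to $R_0<\log 2$, which is exactly the range of validity of Lemma~\ref{jkInclusion_general}. Applying that lemma to $B_j(x,R_0)$ then yields $B_j(x,R_0)\subset B_k(x,R)$ with $R=\log\bigl(\tfrac{1}{2-e^{R_0}}\bigr)=\log\bigl(\tfrac{1}{2-\exp(t\delta_D(x)/(1-t\delta_D(x)))}\bigr)$, matching the stated formula.

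\textbf{Asymptotics.} Setting $u=t\delta_D(x)$, as $t\to 0$ one has $u\to 0$, and Taylor expansion gives $r=\log\bigl(1+\tfrac{u}{1+u}\bigr)=u+O(u^2)$ while $R_0=\tfrac{u}{1-u}=u+O(u^2)$, so $e^{R_0}=1+u+O(u^2)$ and hence $R=-\log(2-e^{R_0})=-\log(1-u+O(u^2))=u+O(u^2)$. Therefore $R/r\to 1$, which can also be verified by a direct application of l'H\^opital's rule.

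The only genuinely non-routine step is matching the admissible range for $t$ to the hypothesis $R_0<\log 2$ of Lemma~\ref{jkInclusion_general}; everything else is a direct concatenation of the two previously established inclusions and a short limit computation. I do not expect any serious obstacle here.
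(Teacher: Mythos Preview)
Your proposal is correct and follows essentially the same route as the paper: both chain the containment $B_k(x,r)\subset B_j(x,r)$ from \eqref{jkD} with the first inclusion of Theorem~\ref{c-j inclusion} for the lower bound, and the second inclusion of Theorem~\ref{c-j inclusion} with Lemma~\ref{jkInclusion_general} for the upper bound. Your explicit verification that the hypothesis on $t$ is equivalent to $R_0<\log 2$ is a useful addition that the paper leaves implicit.
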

\begin{proof}
By (\ref{jkD}) and Theorem~\ref{c-j inclusion} we have
$$B_k(x,r)\subset B_j(x,r)\subset B_c(x,(e^r-1)/(\delta_D(x)(2-e^r)))
$$
and the first inclusion follows. Again from Theorem~\ref{c-j inclusion} and Lemma~\ref{jkInclusion_general} we have
$$
  B_c(x,t)\subset B_j \left( x, \frac{t\delta_D(x)}{1-t\delta_D(x)} \right) \subset B_k \left( x,\log \left( \frac{1}{2-\exp(t\delta_D(x)/(1-t\delta_D(x)))} \right) \right).
$$
By l'H$\hat{\mbox{o}}$spital rule it follows easily that $R/r\to 1$ as $t\to 0$.
Hence the proof of our theorem is complete. 
\end{proof}

\medskip
\begin{acknowledgement}
The authors would like to thank Professor Zair Ibragimov for useful discussions on this
topic.
\end{acknowledgement}


\end{document}